\title{Quantifiers for quantum logic}
\author{Chris Heunen}
\newcommand{\after}{\circ}
\newcommand{\cat}[1]{\ensuremath{\mathbf{#1}}}
\newcommand{\Cat}[1]{\ensuremath{\mathbf{#1}}}
\newcommand{\id}[1][]{\ensuremath{\mathrm{id}_{#1}}}
\newcommand{\op}{\ensuremath{^{\mathrm{op}}}}
\newcommand{\field}[1]{\ensuremath{\mathbb{#1}}}
\newcommand{\tensor}{\ensuremath{\otimes}}
\newcommand{\ie}{\textit{i.e.}~}
\newcommand{\eg}{\textit{e.g.}~}
\newcommand{\coker}{\ensuremath{\mathrm{coker}}}
\newcommand{\Sub}{\ensuremath{\mathrm{Sub}}}
\newcommand{\Hilb}{\Cat{Hilb}}
\newcommand{\ClSub}{\ensuremath{\mathrm{ClSub}}}
\newcommand{\pullback}[1][dr]{\save*!/#1-1.2pc/#1:(-1,1)@^{|-}\restore}
\newtheorem{theorem}{Theorem}
\newtheorem{lemma}{Lemma}
\newtheorem{proposition}{Proposition}
\newtheorem{corollary}{Corollary}
\newtheorem{definition}{Definition}
\newenvironment{proof}[1][Proof]%
{ \begin{trivlist}%
  \item[\hskip \labelsep {\bfseries #1}]%
}%
{ \end{trivlist}%
}
\newcommand{\qed}{\nobreak\hfill$\Box$}
\newcommand{\qedhere}{\tag*{$\Box$}}
\begin{document}
\maketitle

\begin{abstract}
  We consider categorical logic on the category of Hilbert spaces.
  More generally, in fact, any pre-Hilbert category suffices.  
  We characterise closed subobjects, and prove that they
  form orthomodular lattices. This shows that quantum logic is just an
  incarnation of categorical logic, enabling us to establish an   
  existential quantifier for quantum logic, and conclude that there
  cannot be a universal quantifier.
\end{abstract}

\section{Introduction}

Quantum logic is the study of closed subspaces of a Hilbert
space~\cite{birkhoffvonneumann:quantumlogic}. Intriguingly, this
`logic' is not distributive, but only satisfies the weaker axiom of
orthomodularity. One of the shortcomings that has kept it from
wide adoption is the lack of quantifiers. In fact, it has been called
a `non-logic'~\cite{abramsky:temperleylieb}. 

On the other hand, categorical
logic~\cite{lambekscott:categoricallogic} can be seen as a unified 
framework for any kind of logic that deserves the name. 
It is concerned with interpreting (syntactical) logical formulae in
categories with enough structure to accommodate this.
An important part of it is the study of subobjects of a given object
in the category at hand. Perhaps its most gratifying feature is that
it gives a canonical prescription of what quantifiers should be.

The aim of this paper is to show that quantum logic is just an
incarnation of categorical logic in categories like that of Hilbert
spaces. In particular, we will establish an existential quantifier,
and conclude that there cannot be a universal quantifier.

Section~\ref{sec:hilbertcats} first abstracts the properties
of the category of Hilbert spaces that we need. This results in an
axiomatisation of \emph{(pre-)Hilbert categories} greatly resembling
that of monoidal Abelian categories. In fact, any (pre-)Hilbert category
embeds into the category of (pre-)Hilbert spaces
itself~\cite{heunen:hilbcatsembedding}. Next,
Section~\ref{sec:subobjects} starts the investigation of subobjects in
Hilbert categories.  It turns out that the natural objects of study
are not the subobjects, but the  \emph{closed subobjects} or
\emph{$\dag$-subobjects}. Section~\ref{sec:existentialquantor} then
derives a functor that behaves as an existential quantifier according to
categorical logic. Section~\ref{sec:orthogonality} studies the
emergent concept of orthogonality in Hilbert categories. First, it
proves that $\dag$-subobjects form orthomodular lattices. Second, it
exhibits a tight connection between adjoint morphisms in the base
category and adjoint functors between the lattices of subobjects, the
latter being important in connection to quantifiers. 

\subsubsection*{Related work}

The present article should not be confused with the `categorical quantum
logic' of~\cite{duncan:typesforquantumcomputing}. That work develops a
type theory. Of course this is related: ``every logic is a logic over
a type theory''~\cite{jacobs:fibrations}. This paper develops the
logic over `the type theory of Hilbert spaces'.

This paper also differs from \cite{harding:orthomodularity}, in that
the aim is explicitly a categorical logic. Another difference is that
that paper restricts to those projections that have an
orthocomplement, whereas we derive orthomodularity from prior
assumptions (namely $\dag$-kernels).

\section{Pre-Hilbert categories}
\label{sec:hilbertcats}

This section introduces the categories in which our study takes place,
somewhat concisely. For more information we refer
to~\cite{heunen:hilbcatsembedding}. 

A functor $\dag:\cat{H}\op \to \cat{H}$ with $X^\dag=X$ on objects and 
$f^{\dag\dag}=f$ on morphisms is called a \emph{$\dag$-functor}; the
pair $(\cat{H}, \dag)$ is then called a \emph{$\dag$-category}. Such
categories are automatically isomorphic to their opposite, and the
$\dag$-functor witnesses this self-duality.
We can consider coherence of the $\dag$-functor with all sorts of
structures. 
A morphism $m$ in such a category that satisfies $m^\dag m = \id$ is
called a \emph{$\dag$-mono} and denoted $\xymatrix@1{\ar@{ |>->}[r] &
}$. Likewise, $e$ is a \emph{$\dag$-epi}, denoted
$\xymatrix@1{\ar@{-|>}[r] & }$, when $ee^\dag=\id$. A morphism is
called a \emph{$\dag$-iso} when it is both $\dag$-epi and $\dag$-mono.
Similarly, a biproduct
on such a category is called a \emph{$\dag$-biproduct} when
$\pi^\dag = \kappa$, where $\pi$ is a projection and $\kappa$ an
injection. This is equivalent to demanding $(f \oplus g)^\dag = f^\dag
\oplus g^\dag$.
Also, an equaliser is called a \emph{$\dag$-equaliser} when
it is a $\dag$-mono, and a kernel is called a \emph{$\dag$-kernel}
when it is a $\dag$-mono. Finally, a $\dag$-category $\cat{H}$ is
called \emph{$\dag$-monoidal} when it is 
equipped with monoidal structure $(\tensor,C)$ that cooperates with
the $\dag$-functor, in the sense that $(f \tensor g)^\dag = f^\dag
\tensor g^\dag$, and the coherence isomorphisms are
$\dag$-isomorphisms. 

\begin{definition}
\label{def:hilblikecategory}
  A category is called a \emph{pre-Hilbert category} when
  \begin{itemize}
    \item it has a $\dag$-functor;
    \item it has finite $\dag$-biproducts;
    \item it has (finite) $\dag$-equalisers;
    \item every $\dag$-mono is a $\dag$-kernel; and
    \item it is symmetric $\dag$-monoidal.
  \end{itemize}
\end{definition}

Notice that a Hilbert category is self-dual (by the $\dag$-functor),
and therefore that it automatically has all finite colimits, too.

The category $\Cat{preHilb}$ itself is a pre-Hilbert category whose
monoidal unit is a simple generator, and so are its full subcategories
$\Cat{Hilb}$, and $\Cat{fdHilb}$ of finite-dimensional Hilbert
spaces. Also, if $\cat{C}$ is a small category and $\cat{H}$ a
pre-Hilbert category, then $[\cat{C}, \cat{H}]$ is again a pre-Hilbert
category. Working in pre-Hilbert categories can be thought of as
`natural' or `baseless' (pre-)Hilbert space theory.

\section{Subobjects}
\label{sec:subobjects}

This section characterises closed subobjects categorically.
But let us start with some easy properties of $\dag$-mono's.

\begin{lemma} 
\label{lem:dagmonoproperties} 
  In any $\dag$-category: 
  \begin{enumerate}
  \renewcommand{\labelenumi}{\emph{(\alph{enumi})}}
    \item A $\dag$-mono which is epi is a $\dag$-iso.
    \item The composite of $\dag$-epi's is again a $\dag$-epi.
    \item If $\smash{\xymatrix@1{X \ar^-{f}[r] & Y \ar^-{g}[r] & Z}}$ are such
      that both $gf$ and $f$ are $\dag$-epi, so is $g$. 
    \item If $m$ and $n$ are $\dag$-monos, and $f$ is an iso with
      $nf=m$, then $f$ is a $\dag$-iso. 
  \end{enumerate}
\end{lemma}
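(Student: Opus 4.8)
The plan is to argue everything directly from the algebra of the $\dag$-functor, using only that it is contravariant, so $(gf)^\dag = f^\dag g^\dag$, together with the defining identities $m^\dag m = \id$ for a $\dag$-mono and $e e^\dag = \id$ for a $\dag$-epi. Three of the four parts are then essentially one-line substitutions; the single genuinely categorical step is part~(a), and part~(d) will be made to rely on it.

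For part~(a), let $m$ be a $\dag$-mono that happens to be epi, so $m^\dag m = \id$, and the target is $m m^\dag = \id$. I would compute $(m m^\dag) m = m (m^\dag m) = m = \id\, m$, and then cancel the common right factor $m$ using that $m$ is epi, obtaining $m m^\dag = \id$. Thus $m$ is also $\dag$-epi and hence a $\dag$-iso. The only real subtlety anywhere in the lemma lives here: rather than trying to invert $m$ one must notice that the two sides share a trailing factor $m$ and exploit epi-cancellation.

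Parts~(b) and~(c) need no cancellation at all. For~(b), given composable $\dag$-epis $e$ and $f$, I would expand $(fe)(fe)^\dag = f (e e^\dag) f^\dag = f f^\dag = \id$, so the composite is a $\dag$-epi. For~(c), from $(gf)(gf)^\dag = \id$ and $f f^\dag = \id$ I would read off $\id = g (f f^\dag) g^\dag = g g^\dag$, so $g$ is a $\dag$-epi.

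For part~(d), I would first show that $f$ is a $\dag$-mono: from $nf = m$ and contravariance, $f^\dag f = f^\dag (n^\dag n) f = (nf)^\dag (nf) = m^\dag m = \id$. Since $f$ is moreover assumed to be an iso it is in particular epi, so part~(a) applies verbatim and $f$ is a $\dag$-iso. The main (and admittedly quite mild) obstacle across the whole lemma is therefore isolated in part~(a); once that is in hand, (d) is immediate by feeding $f$ back into it.
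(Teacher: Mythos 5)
Your proposal is correct and follows essentially the same route as the paper: epi-cancellation of the trailing factor for (a), direct expansion of $(gf)(gf)^\dag$ for (b) and (c), and for (d) showing $f$ is a $\dag$-mono (you inline the computation where the paper cites the dual of (c)) before invoking (a).
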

\begin{proof}
  For (a), notice that $ff^\dag=\id$ implies $ff^\dag f=f$,
  from which $f^\dag f=\id$ follows from the assumption that $f$ is epi.
  For (b): $gf(gf)^\dag = g f f^\dag g^\dag = g^\dag g = \id$.
  And for (c): $gg^\dag = gff^\dag g = gf (gf)^\dag = \id$.
  Finally, consider (d). If $f$ is iso, in particular it is epi. If
  both $nf$ and $n$ are $\dag$-mono, then so is $f$, by (c). Hence by
  (a), $f$ is $\dag$-iso.
  \qed  
\end{proof}

\emph{From now on, we work in an arbitrary pre-Hilbert category $\cat{H}$.}

\begin{lemma}
  \label{lem:Abepi}
  A morphism $m$ is mono iff $\ker(m)=0$.
  Consequently, if $mf=0$ implies $f=0$ for all $f$, then $m$ is mono.
\end{lemma}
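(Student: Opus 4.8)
The plan is to prove the stated equivalence first and then read off the ``consequently'' clause. Throughout, write $k\colon K\to X$ for $\ker(m)$, which exists as the $\dag$-equaliser of $m$ and the zero morphism; being a $\dag$-equaliser it is in particular a $\dag$-mono, and hence a mono. Having this map and its monicity in hand is all the setup the argument needs.

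For the direction ``$m$ mono $\Rightarrow \ker(m)=0$'' I would argue purely formally. Since $m\after k=0=m\after 0$ (the second being a composite with the zero morphism $K\to X$), monicity of $m$ gives $k=0$, so the kernel map is itself a zero morphism. But $k$ is also mono, and a mono that equals a zero morphism forces its domain to be a zero object: from $k\after\id[K]=k=k\after 0$ and $k$ mono we obtain $\id[K]=0$, i.e.\ $K=0$. Thus $\ker(m)=0$. This half uses only the monicity of $m$ and of $k$ together with the behaviour of the zero object, and should present no difficulty.

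For the converse ``$\ker(m)=0 \Rightarrow m$ mono'', suppose $mf=mg$ for a parallel pair $f,g\colon Z\to X$. I would pass to the difference $f-g$: using that $\cat{H}$ carries additive inverses on its hom-sets (part of its resemblance to a monoidal Abelian category, beyond the mere commutative-monoid enrichment furnished by the $\dag$-biproducts), we have $m\after(f-g)=mf-mg=0$, so $f-g$ factors through $k=\ker(m)$. Since $\ker(m)=0$ this factorisation runs through a zero object, forcing $f-g=0$ and hence $f=g$. I expect \emph{this} step to be the main obstacle: the equivalence genuinely depends on subtraction being available in $\cat{H}$, and the argument collapses if only commutative-monoid structure is at hand.

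Finally, the ``consequently'' clause is immediate from the established equivalence. If $mf=0$ implies $f=0$ for every $f$, then in particular $m\after k=0$ yields $k=0$; as in the forward direction this forces $K=0$, so $\ker(m)=0$, and the converse direction then delivers that $m$ is mono. No further use of subtraction is needed here, since the hypothesis is applied directly to the kernel map.
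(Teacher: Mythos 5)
Your forward direction and the ``consequently'' clause are fine, but the converse --- which is the real content of the lemma --- has a genuine gap, and it is precisely the step you flag yourself as the obstacle. You pass from $mu=mv$ to $m\after(u-v)=0$, which requires additive inverses on hom-sets. A pre-Hilbert category as defined in Definition~\ref{def:hilblikecategory} is only known to be enriched in commutative monoids (via the finite $\dag$-biproducts); Abelian-group enrichment is invoked in this paper only later, and only under the additional hypothesis that the monoidal unit is a simple generator, by appeal to an external theorem. Lemma~\ref{lem:Abepi} is stated and used for an arbitrary pre-Hilbert category, so subtraction is not available at this point, and your argument does not go through as written. Either the Ab-enrichment fails in general, or it is a nontrivial unproven claim; in both cases your proof is incomplete.

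The paper's proof avoids subtraction entirely. Given $mu=mv$, it takes $q$ to be the $\dag$-coequaliser of $u$ and $v$. Since $q$ is a $\dag$-epi, the dual of the axiom that every $\dag$-mono is a $\dag$-kernel gives $q=\coker(w)$ for some $w$. Because $mu=mv$, $m$ factors through $q$ as $m=nq$, whence $mw=nqw=n\after 0=0$; so $w$ factors through $\ker(m)=0$ and $w=0$. Then $q=\coker(0)$ is a $\dag$-iso, in particular mono, and $qu=qv$ forces $u=v$. To repair your proposal you would need either to replace the difference argument by this coequaliser argument, or to restrict the lemma to pre-Hilbert categories whose unit is a simple generator --- which would weaken it below what the rest of the paper requires.
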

\begin{proof}
  Suppose $\ker(m)=0$. Let $u,v$ satisfy $mu=mv$. Put $q$ to be the
  $\dag$-coequaliser of 
  $u$ and $v$. Since $q$ is a $\dag$-epi, $q=\coker(w)$ for some $w$.
  As $mu=mv$, $m$ factors through $q$ as $m=nq$.
  Then $mw=nqw=n0=0$, so $w$ factors through $\ker(m)$ as $w=\ker(m)
  \after p$ for some $p$. But since $\ker(m)=0$, $w=0$. So $q$ is a
  $\dag$-iso, and in particular mono. Hence, from $qu=qv$ follows
  $u=v$. Thus $m$ is mono.
  \[\xymatrix@C+2ex@R+2ex{
    \ar@{ |>->}_<<<{\ker(m)}[dr] & \ar@{-->}_-{p}[l] \ar^-{w}[d] \\
    \ar@<.5ex>^-{u}[r] \ar@<-.5ex>_-{v}[r] 
    & \ar^-{m}[r] \ar@{-|>}_-{q}[d] & \\
    & \ar@{-->}_-{n}[ur] 
  }\]
  Conversely, if $m$ is mono, it follows from $m \after \ker(m) = 0 =
  m \after 0$ that $\ker(m)=0$.
   
  If $f=0$ whenever $mf=0$, then $\ker(m) = 0$, so that $m$ is mono. 
  \qed
\end{proof}

\subsection{Factorisation}

This subsection proves that any morphism $f:X \to Y$ in a pre-Hilbert
category can be factorised as an epi $e:X \to I$ followed by a
$\dag$-mono $m:I \to Y$. (In $\Cat{Hilb}$, this is very easily proved
concretely: $e$ is simply the restriction of $f$ to $I$, the closure
of its range, and $m$ is the isometric inclusion of $I$ into $Y$.)
Recall that since a pre-Hilbert category has $\dag$-kernels, it
automatically also has $\dag$-cokernels by $\coker(f) =
\ker(f^\dag)^\dag$. 

\begin{lemma}
  \label{lem:factorisation}
  Any pre-Hilbert category has a factorisation system consisting of
  mono's and $\dag$-epi's. The factorisation is unique up to a unique
  $\dag$-iso. Consequently, every $\dag$-epi is a $\dag$-cokernel of
  its $\dag$-kernel. 
\end{lemma}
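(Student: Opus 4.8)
The plan is to build the factorisation explicitly from $\dag$-kernels and $\dag$-cokernels, and then extract uniqueness from an orthogonality (diagonal fill-in) property. Given $f\colon X\to Y$, I would set $k=\ker(f)$, which is a $\dag$-mono since it arises as the $\dag$-equaliser of $f$ and $0$, and then take $e=\coker(k)=\ker(k^\dag)^\dag\colon X\to I$; by construction $e$ is a $\dag$-epi. Because $f\after k=0$, the universal property of the cokernel $e$ yields a unique $m\colon I\to Y$ with $f=m\after e$. This is the candidate factorisation into a $\dag$-epi followed by a mono; the epi-followed-by-$\dag$-mono phrasing of the preamble is then its formal dual, obtained by applying the self-duality $\dag$ (which swaps $\dag$-epis with $\dag$-monos and monos with epis).

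The heart of the argument is to show that $m$ is mono, for which Lemma~\ref{lem:Abepi} reduces the task to proving $\ker(m)=0$. Here I would exploit that $e$, being a $\dag$-epi, has $e^\dag$ as a section, \ie $e\after e^\dag=\id$. Writing $\ell=\ker(m)$, so that $m\after\ell=0$, I compute $f\after(e^\dag\after\ell)=m\after e\after e^\dag\after\ell=m\after\ell=0$; hence $e^\dag\after\ell$ factors through $k=\ker(f)$, say $e^\dag\after\ell=k\after p$. Applying $e$ and using both $e\after e^\dag=\id$ and $e\after k=\coker(k)\after k=0$ gives $\ell=e\after e^\dag\after\ell=e\after k\after p=0$, so $\ker(m)=0$ and $m$ is mono. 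I expect this to be the main obstacle, since it is precisely the step where the $\dag$-structure is indispensable: without the section $e^\dag$ the coimage map need not be the required factor.

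For uniqueness up to a unique $\dag$-iso I would first establish the orthogonality of $\dag$-epis to monos. Given a commuting square with a $\dag$-epi $e$ on the left and a mono $m$ on the right, the candidate diagonal $d=u\after e^\dag$ satisfies $m\after d=v$ immediately from $e\after e^\dag=\id$, while the identity $u\after e^\dag\after e=u$ is recovered by post-composing with the mono $m$ and cancelling; uniqueness of $d$ is then forced by $m$ being mono. Applying this to two factorisations $m\after e=f=m'\after e'$ produces mutually inverse comparison maps, hence an iso $\phi$ with $\phi\after e=e'$ and $m'\after\phi=m$. Dualising $\phi\after e=e'$ to $e^\dag\after\phi^\dag={e'}^\dag$ between the $\dag$-monos $e^\dag,{e'}^\dag$ finally lets me invoke Lemma~\ref{lem:dagmonoproperties}(d) to conclude that $\phi^\dag$, and so $\phi$, is a $\dag$-iso.

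For the closing claim, let $e$ be a $\dag$-epi with $\dag$-kernel $k=\ker(e)$. Then $e$ admits two factorisations of the above shape: its canonical one through $\coker(k)$, and the trivial one $e=\id\after e$, in which the identity is the mono and $e$ itself is the $\dag$-epi. Uniqueness identifies these through a $\dag$-iso, forcing the mono part of the canonical factorisation to be a $\dag$-iso; consequently $e$ equals $\coker(k)$ up to $\dag$-iso, that is, $e$ is a $\dag$-cokernel of its $\dag$-kernel.
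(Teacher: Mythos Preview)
Your argument is correct, and in the crucial step it is genuinely different from the paper's. For the monicity of $m$ the paper argues by taking any $g$ with $mg=0$, forming $q=\coker(g)$, observing that the composite $qe$ is a $\dag$-epi and hence (by the axiom that every $\dag$-mono is a $\dag$-kernel, dualised) a $\dag$-cokernel of some $h$, and then chasing through $\ker(f)$ to conclude that $q$ is split mono and so $g=0$. You instead exploit directly that the $\dag$-epi $e$ is split by $e^\dag$: from $m\ell=0$ you get $f(e^\dag\ell)=m\ell=0$, factor $e^\dag\ell$ through $k=\ker(f)$, and kill $\ell$ using $ek=0$. This is shorter, and notably it does \emph{not} invoke the axiom ``every $\dag$-mono is a $\dag$-kernel'' at this point; it only uses that $\dag$-cokernels exist and are $\dag$-epis. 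Likewise, where the paper appeals to the general fact that regular epis are strong (citing Borceux) to obtain the diagonal fill-in, you construct the lift explicitly as $d=u\after e^\dag$ and verify both triangles by hand, again using the section $e^\dag$. The trade-off is that the paper's route generalises verbatim to any (strong epi, mono) situation, whereas yours is tailored to the $\dag$-setting but is more self-contained and makes the role of the $\dag$-structure transparent. The invocation of Lemma~\ref{lem:dagmonoproperties}(d) for the $\dag$-iso upgrade, and the handling of the final clause via the trivial factorisation $e=\id\after e$, match the paper.
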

\begin{proof} 
  Let a morphism $f$ be given.
  Put $k=\ker(f)$ and $e=\coker(k)$.
  Since $fk=0$ (as $k=\ker(f)$), $f$ factors through $e$($=\coker(k)$)
  as $f=me$.
  \[\xymatrix@C+3ex@R+3ex{
    & \ar^-{h}[d] \ar_-{l}[dl] \\
    \ar@{ |>->}_-{k}[r] & \ar@{-|>}_-{e}[d] \ar^-{f}[r] & \\
    \ar_-{g}[r] & \ar_-{m}[ur] \ar@{-|>}@<-.5ex>_-{q}[r] 
    & \ar_-{r}[u] \ar@<-.25ex>_-{s}[l]
  }\]
  We have to show that $m$ is mono.
  Let $g$ be such that $mg=0$. By Lemma~\ref{lem:Abepi} 
  it suffices to show that $g=0$.
  Since $mg=0$, $m$ factors through $q=\coker(g)$ as $m=rq$.
  Now $qe$ is a $\dag$-epi, being the composite of two $\dag$-epi's.
  So $qe=\coker(h)$ for some $h$.
  Since $fh=rqeh=r0=0$, $h$ factors through $k$($=\ker(f)$) as
  $h=kl$. 
  Finally $eh=ekl=0l=0$, so $e$ factors through $qe=\coker(h)$ as
  $q=sqe$. 
  But since $e$ is a ($\dag$-)epi, this means $sq=\id$, whence $q$ is
  mono.
  It follows from $qg=0$ that $g=0$, and the factorisation is
  established. 
  
  Since $\dag$-epi's are regular epi's, and hence strong epi's, 
  functoriality of the factorisation follows
  from~\cite[4.4.5]{borceux:1}. By
  Lemma~\ref{lem:dagmonoproperties}d, the factorisation is unique up
  to a $\dag$-iso.

  Finally, suppose that $f$ is a $\dag$-epi. Then both the above 
  $f = m \after e$ and $f = f \after \id$ are mono-$\dag$-epi
  factorisations of $f$. Hence $f=e$ up to the unique mediating
  $\dag$-iso $m$, showing that $f=\coker(\ker(f))$.
  \qed
\end{proof}

We just showed that any pre-Hilbert category has a factorisation
system consisting of mono's and $\dag$-epi's. Equivalently, it has a
factorisation system of epi's and $\dag$-mono's. Indeed, if we can
factor $f^\dag$ as an $\dag$-epi followed by a mono, then taking the
daggers of those, we find that $f^{\dag\dag}=f$ factors as an epi
followed by a $\dag$-mono. The combination of both factorisations
yields that every morphism can be written as a $\dag$-epi, followed by
a monic epimorphism, followed by a $\dag$-mono; this can be thought of
generalising \emph{polar decomposition}. 

\subsection{Closed subobjects, pullbacks}

A \emph{subobject} of an object $X$ in a $\dag$-category is an
equivalence class of mono's $m:M \rightarrowtail X$, where $m$ is
equivalent to $n:N \rightarrowtail X$ if there is an isomorphism $f:M
\to N$ satisfying $nf=m$. The class of subobjects of $X$ is denoted
$\Sub(X)$. It is partially ordered by $M \leq N$ iff there is a 
morphism $f:M \to N$ with $nf=m$. It also has a largest element,
represented by $\id[X]:X \to X$. Because a pre-Hilbert category has
pullbacks, $\Sub(X)$ is in fact a meet-semilattice\footnote{We 
disregard size issues here. A $\dag$-category is called
\emph{$\dag$-well-powered} if $\ClSub(X)$ is a set for all objects $X$
in it. Since $\ClSub(X)$ for $X \in \Cat{Hilb}$ is the set of closed
subspaces of $X$, $\Cat{Hilb}$ is $\dag$-well-powered.}, the meet of
$M$ and $N$ being represented by the pullback of $m$ and $n$. 
Moreover, for each $f:X \to Y$, pullback along $f$ induces a
meet-preserving map $f^{-1} : \Sub(Y) \to \Sub(X)$.
Thus we have a functor $\Sub : \cat{H}\op \to \Cat{MeetSLat}$, the
\emph{inverse image functor}.  

A \emph{$\dag$-subobject} is a subobject that can be represented by a
$\dag$-mono. We write $\ClSub(X)$ for the class of $\dag$-subobjects of
$X$. It inherits the partial ordering of
$\Sub(X)$. It can be characterised precisely when a subobject $m$ is a
$\dag$-subobject, namely when there is an isomorphism $\varphi$ such
that $m^\dag m = \varphi^\dag \varphi$~\cite[5.6]{selinger:daggeridempotents}. 

\begin{lemma}
\label{lem:dsubstableunderpullbacks}
  $\dag$-subobjects are stable under pullbacks.
\end{lemma}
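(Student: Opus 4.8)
The plan is to reduce the statement to the fact that a pre-Hilbert category has $\dag$-kernels. Fix a $\dag$-mono $n \colon N \rightarrowtail Y$, representing a $\dag$-subobject of $Y$, together with an arbitrary morphism $f \colon X \to Y$; I must show that the pullback of $n$ along $f$ is again representable by a $\dag$-mono. Since every $\dag$-mono is a $\dag$-kernel, I may write $n = \ker(c)$ for some morphism $c \colon Y \to Z$; in particular $c \after n = 0$.

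The key step is to identify the pullback concretely. I claim that $\ker(c \after f) \colon K \rightarrowtail X$, together with an induced map into $N$, is a pullback of $n$ along $f$. Write $j = \ker(c \after f)$. Since $(c \after f) \after j = 0$ and $n = \ker(c)$, the morphism $f \after j$ factors through $n$ as $f \after j = n \after u$ for a unique $u \colon K \to N$, which produces a cone over the pullback diagram. For the universal property, suppose $a \colon W \to X$ and $b \colon W \to N$ satisfy $f \after a = n \after b$. Then $(c \after f) \after a = c \after n \after b = 0$, so $a$ factors uniquely through $j = \ker(c \after f)$ as $a = j \after t$; using that $n$ is mono together with $n \after u = f \after j$, one checks $u \after t = b$, so $t$ is the unique mediating morphism. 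Hence $j$ is a pullback projection onto $X$.

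Finally, because a pre-Hilbert category has $\dag$-equalisers it has $\dag$-kernels, and any two kernels of a given morphism are isomorphic as subobjects, so $\ker(c \after f)$ can be represented by a $\dag$-mono. As the pullback is determined up to isomorphism, the pullback of $n$ along $f$ is therefore a $\dag$-subobject of $X$, which is precisely the claim.

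The main obstacle is the middle step: verifying that the pullback of a kernel along $f$ is computed as the kernel of the composite $c \after f$. The verification itself is a routine diagram chase, but it is the only place where genuine work happens; everything else is bookkeeping with the axioms. One subtlety to watch is that the conclusion ``$\ker(c \after f)$ is a $\dag$-mono'' relies on choosing the $\dag$-kernel representative rather than an arbitrary kernel, which is legitimate exactly because all kernels of a fixed morphism agree as subobjects.
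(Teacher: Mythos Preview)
Your proof is correct and takes a genuinely different, more direct route than the paper. You exploit the axiom ``every $\dag$-mono is a $\dag$-kernel'' at the outset to write $n = \ker(c)$, then invoke the standard fact that the pullback of $\ker(c)$ along $f$ is $\ker(c \after f)$; since $\dag$-kernels exist, you may choose this representative to be a $\dag$-mono and you are done. The paper instead starts from an arbitrary pullback $(P,p,q)$, applies the epi--$\dag$-mono factorisation of Lemma~\ref{lem:factorisation} to $p = \ker(\coker(p)) \after e$, and then argues (via the universal property of the pullback and of $\coker(n)$) that the epi part $e$ is actually an isomorphism. Your argument is shorter and avoids the factorisation machinery entirely; the paper's argument has the mild advantage that it works directly with a given pullback rather than reconstructing one, and it foreshadows the pervasive use of $\ker(\coker(-))$ as a closure operator in the subsequent lemmas. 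Both approaches ultimately rest on the same axiom, but yours isolates the essential content --- pullbacks of kernels are kernels --- more cleanly.
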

\begin{proof}
  Let $\xymatrix@1{n:N \ar@{ |>->}[r] & Y}$ and $f:X \to Y$.
  Consider their pullback $(P,p,q)$. Factorise $p$ as
  $p=\ker(\coker(p)) \after e$ with $e$ epi. 
  \[\xymatrix{
    & P \ar@{->>}_-{e}[dl] \ar@{ >->}^-{p}[dd] \ar^-{q}[rr] \pullback
    && N \ar@{ |>->}^-{n=\ker(\coker(n))}[dd] \\
    I \ar@{ |>->}_-{\ker(\coker(p))}[dr] \ar@{-->}_-{g}[urrr] \\
    & X \ar_-{f}[rr] && Y
  }\]
  Now 
  \begin{align*}
        \coker(n) \after f \after \ker(\coker(p)) \after e 
    & = \coker(n) \after f \after p \\
    & = \coker(n) \after n \after q \\
    & = 0 \after q = 0.
  \end{align*}
  Since $e$ is epi, $\coker(n) \after f \after \ker(\coker(p)) = 0$.
  So there is a $g:I \to N$ satisfying $n \after g = f \after
  \ker(\coker(p))$. Since $P$ is a pullback, there is a $h:I \to P$
  such that $\ker(\coker(p))=p \after h$. 
  Now $\ker(\coker(p)) = p \after h = \ker(\coker(p)) \after e \after
  h$ and $\ker(\coker(p))$ is mono, so $e \after h = \id[P]$.
  Also $p = \ker(\coker(p)) \after e = p \after h \after e$ and $p$ is
  mono, so $h \after e = \id[I]$.
  Hence $e$ is iso, and $p=f^{-1}(n) \in \ClSub(X)$.
  \qed
\end{proof}

Hence every morphism $f:X \to Y$ induces a meet-preserving map $f^{-1}
: \ClSub(Y) \to \ClSub(X)$. Thus we have a functor
\[
  \ClSub : \cat{H}\op \to \Cat{MeetSLat},
\]
that we also call the \emph{inverse image functor} with abuse of
terminology. 

Recall that a \emph{closure operation}~\cite{birkhoff:latticetheory} 
consists in giving for every $m \in \Sub(X)$ a $\overline{m} \in
\Sub(X)$, satisfying (i) $m \leq \overline{m}$, (ii) $m \leq n
\Rightarrow \overline{m} \leq \overline{n}$, and (iii)
$\overline{\overline{m}} = \overline{m}$.

\begin{lemma}
\label{lem:closure}
  $m \mapsto \ker(\coker(m))$ is a closure operation. 
\end{lemma}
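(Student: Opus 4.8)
The plan is to verify the three closure axioms directly for the operation $m \mapsto \overline{m} = \ker(\coker(m))$, working with representative monos and using the universal properties of kernels and cokernels throughout. The key technical fact I expect to rely on is that $\ker(\coker(m))$ is a $\dag$-mono (since $\coker(m) = \ker(m^\dag)^\dag$ is a $\dag$-epi, its kernel is a $\dag$-kernel, hence a $\dag$-mono), so that $\overline{m}$ genuinely lands in $\ClSub(X)$ and represents a well-defined subobject of $X$. I should also check at the outset that the assignment respects the equivalence relation defining subobjects, i.e. that replacing $m$ by an equivalent mono yields the same $\ker(\coker(m))$ up to iso; this follows because equivalent monos have the same cokernel up to iso, and kernels are determined up to iso by the morphism they are taken of.

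For axiom (i), $m \leq \overline{m}$, I would observe that $\coker(m) \after m = 0$ by the defining property of the cokernel, so $m$ factors through $\ker(\coker(m)) = \overline{m}$; the factoring morphism witnesses $m \leq \overline{m}$ in the subobject order. For axiom (ii), monotonicity, suppose $m \leq n$, say $m = n \after f$ for some $f$. Then $\coker(n) \after m = \coker(n) \after n \after f = 0 \after f = 0$, so $m$ factors through $\ker(\coker(n)) = \overline{n}$. But then, since $\overline{m} = \ker(\coker(m))$ is the \emph{largest} subobject annihilated by $\coker(m)$ through which $m$ factors, I need to promote this to a comparison $\overline{m} \leq \overline{n}$; the cleanest route is to show $\coker(n) \after \overline{m} = 0$ and conclude $\overline{m}$ factors through $\ker(\coker(n)) = \overline{n}$. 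That vanishing should follow from the factorisation of $m$ through $\overline{m}$ together with $\coker(m) \after \overline{m} = 0$, using that $\coker(n)$ kills everything $\coker(m)$ kills when $m \leq n$.

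For the idempotence axiom (iii), $\overline{\overline{m}} = \overline{m}$, the key point is that $\overline{m} = \ker(\coker(m))$ is already a $\dag$-mono, and for a $\dag$-mono $k$ one has $\ker(\coker(k)) = k$ up to iso. I would prove this subclaim by noting that $\coker(k)^\dag = \ker(k^\dag)$ and using that $k$, being a $\dag$-kernel (every $\dag$-mono is a $\dag$-kernel by the pre-Hilbert axioms), equals the kernel of its own cokernel by the standard kernel-cokernel duality available here; concretely $\ker(\coker(\ker(g))) = \ker(g)$ for any $g$, applied with $g = \coker(m)$. Combining (i) with this subclaim gives $\overline{\overline{m}} \le \overline{m}$ and $\overline{m} \le \overline{\overline{m}}$, hence equality as subobjects.

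The main obstacle I anticipate is axiom (ii): establishing the comparison $\overline{m} \leq \overline{n}$ rather than merely $m \leq \overline{n}$ requires care, since the closure could in principle enlarge $m$ past $n$ before we pass to $\overline{n}$. The safe strategy is to avoid reasoning about ``largest'' subobjects directly and instead show the single equation $\coker(n) \after \ker(\coker(m)) = 0$, which immediately yields the desired factorisation of $\overline{m}$ through $\overline{n}$; proving that equation is where the universal properties of the two cokernels and the hypothesis $m = n \after f$ must be combined carefully. Everything else reduces to routine diagram-chasing with the factorisation and duality facts already established in Lemmas~\ref{lem:factorisation} and~\ref{lem:dagmonoproperties}.
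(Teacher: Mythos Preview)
Your proposal is correct and follows essentially the same route as the paper: (i) via $\coker(m)\after m=0$; (ii) via the factorisation of $\coker(n)$ through $\coker(m)$ (your phrase ``$\coker(n)$ kills everything $\coker(m)$ kills'') to get $\coker(n)\after\ker(\coker(m))=0$; and (iii) via the identity $\ker(\coker(k))=k$ for $\dag$-monos $k$, which the paper cites as Lemma~\ref{lem:factorisation}. The ``obstacle'' you flag in (ii) dissolves exactly as you describe, and the mention of ``the factorisation of $m$ through $\overline m$'' there is not actually needed once you have $\coker(n)=h\after\coker(m)$.
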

\begin{proof}
  For (i): $\coker(m) \after m=0$, so $m \leq \ker(\coker(m))$.
  For (ii): if $m \leq n$, then $\coker(m) \after \ker(\coker(m)) =
  0$,
  \[\xymatrix@R-2ex@C+10ex{
    M \ar^-{m}[dr] \ar[dd] & & \\
    & X \ar^-{\coker(n)}[ur] \ar^-{\coker(m)}[dr] \\
    N \ar^-{n}[ur] & & \ar[uu]  \\
    & \ar_-{0}[ur] \ar^-{\ker(\coker(n))}[uu]
  }\]
  so $\ker(\coker(m)) \leq \ker(\coker(n))$.
  For (iii): since $\ker(\coker(m)) \in \ClSub(X)$, we have
  $\ker(\coker(\ker(\coker(m)))) = \ker(\coker(m))$ by
  Lemma~\ref{lem:factorisation}. 
  \qed
\end{proof}

\begin{lemma}
\label{lem:subindsub}
  There is a reflection
  $\xymatrix@1{ \Sub(X) \ar@<1ex>^-{\ker(\coker(-))}[rr] \ar@{}|\perp[rr]
  && \;\ClSub(X) \ar@<1ex>@{_{(}->}[ll] }$.
\end{lemma}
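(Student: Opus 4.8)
The plan is to recognise this reflection as the canonical adjunction attached to the closure operation of Lemma~\ref{lem:closure}. Since $\Sub(X)$ and $\ClSub(X)$ are merely posets, a reflection is the same datum as a monotone Galois connection whose right adjoint is the inclusion $\ClSub(X) \hookrightarrow \Sub(X)$. Writing $c(m) = \ker(\coker(m))$, this means I must establish the correspondence
\[
  c(m) \leq n \quad\Longleftrightarrow\quad m \leq n
  \qquad (m \in \Sub(X),\; n \in \ClSub(X)).
\]

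First I would record that both maps are monotone and well defined: the inclusion is trivially order preserving, while $c$ is monotone by Lemma~\ref{lem:closure}(ii) and genuinely lands in $\ClSub(X)$ because $\ker(\coker(m))$ is a $\dag$-kernel, hence a $\dag$-mono. Granting this, the displayed equivalence is precisely the standard passage from a closure operation to its associated reflection. For the implication from right to left, if $m \leq n$ then monotonicity gives $c(m) \leq c(n)$, and $c(n) = n$ because $n$ is already closed; for the converse, if $c(m) \leq n$ then extensivity (Lemma~\ref{lem:closure}(i)) yields $m \leq c(m) \leq n$.

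The only step that uses more than the formal closure axioms is the identity $c(n) = n$ for $n \in \ClSub(X)$, that is, that $c$ restricts to the identity on the closed subobjects. Representing $n$ by a $\dag$-mono, the trivial decomposition $n = n \after \id$ exhibits $n$ itself as the $\dag$-mono part of an epi/$\dag$-mono factorisation of the morphism $n$; since the general such factorisation has $\dag$-mono part $\ker(\coker(n))$, the uniqueness clause of Lemma~\ref{lem:factorisation} forces $\ker(\coker(n)) \cong n$ as subobjects, i.e.\ $c(n) = n$ in $\ClSub(X)$. This is the one genuinely categorical point, and it is where uniqueness of the epi/$\dag$-mono factorisation does the work; everything else is the formal fact that a closure operation is the same thing as a reflection onto its fixed points.
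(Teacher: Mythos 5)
Your proof is correct and follows essentially the same route as the paper: both reduce the reflection to the Galois-connection equivalence $\ker(\coker(m)) \leq n \Leftrightarrow m \leq n$ and derive it from parts (i) and (ii) of Lemma~\ref{lem:closure}. You are in fact slightly more careful than the paper's two-line argument, since you make explicit the needed fixed-point identity $\ker(\coker(n)) = n$ for closed $n$ (which the paper leaves implicit, having established it via Lemma~\ref{lem:factorisation} in the proof of Lemma~\ref{lem:closure}(iii)).
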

\begin{proof}
  We have to prove that $\ker(\coker(m)) \leq n$ iff $m \leq n$ for a
  mono $m$ and a $\dag$-mono $n$.
  By (i) of Lemma~\ref{lem:closure} we have $m \leq
  \ker(\coker(m))$, proving one direction. The converse direction is
  just (ii) of Lemma~\ref{lem:closure}. 
  \qed
\end{proof}

The previous lemma could be interpreted as a moral justification for
studying the (replete) semilattice of closed subobjects instead of
that of subobjects.

\subsection{Projections}

Instead of closed subobjects, it turns out we can also consider projections. 
A \emph{projection} on $X$ is a morphism $p:X \to X$ satisfying 
$p\after p = p = p^\dag$. We define $\mathrm{Proj}(X)$ as the set of all 
projections on $X$. It is partially ordered by defining $p \sqsubseteq q$ 
iff $p \after q = p$.

\begin{proposition}
  There is an order isomorphism $\ClSub(X) \cong \mathrm{Proj}(X)$.
\end{proposition}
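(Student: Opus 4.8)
The plan is to exhibit two mutually inverse, order-preserving maps between $\ClSub(X)$ and $\mathrm{Proj}(X)$. First I would define $\Phi : \ClSub(X) \to \mathrm{Proj}(X)$ by sending a $\dag$-subobject represented by a $\dag$-mono $m$ to $p := m \after m^\dag$. This genuinely lands in $\mathrm{Proj}(X)$: self-adjointness is $p^\dag = m^{\dag\dag} m^\dag = m m^\dag = p$, and idempotency is $p \after p = m (m^\dag m) m^\dag = m \after m^\dag = p$, both using $m^\dag m = \id$. To see $\Phi$ is well-defined on equivalence classes, suppose $m$ and $n$ represent the same $\dag$-subobject via an iso $f$ with $nf = m$; by Lemma~\ref{lem:dagmonoproperties}(d) the map $f$ is a $\dag$-iso, so $m m^\dag = n f f^\dag n^\dag = n n^\dag$.

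In the other direction I would define $\Psi : \mathrm{Proj}(X) \to \ClSub(X)$ by factorising a projection $p$ through the epi--$\dag$-mono factorisation of Lemma~\ref{lem:factorisation} as $p = m \after e$, with $e$ epi and $m$ a $\dag$-mono, and sending $p$ to the $\dag$-subobject represented by $m$. The key step, and the part I expect to be the main obstacle, is showing this factorisation is forced into the self-adjoint form $e = m^\dag$, i.e.\ that every projection splits as $p = m \after m^\dag$. This is exactly where idempotency and self-adjointness are used. Since $e$ is epi and $p \after p = p$, from $p \after m \after e = p \after p = p = m \after e$ one cancels $e$ to get $p \after m = m$. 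Moreover $e = m^\dag m e = m^\dag \after p$, so taking daggers gives $e^\dag = p^\dag \after m = p \after m = m$, using $p^\dag = p$; hence $e = e^{\dag\dag} = m^\dag$ as desired. (Equivalently, this is the splitting of $\dag$-idempotents, cf.\ \cite{selinger:daggeridempotents}.)

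With both maps in hand, checking that they are mutually inverse is routine. For $\Psi \after \Phi$: given a $\dag$-mono $m$, the expression $\Phi(m) = m \after m^\dag$ is already an epi--$\dag$-mono factorisation, since $m^\dag$ satisfies $m^\dag (m^\dag)^\dag = m^\dag m = \id$ and is therefore a $\dag$-epi, hence epi; uniqueness of the factorisation up to $\dag$-iso (Lemma~\ref{lem:factorisation}) identifies its $\dag$-mono part with $m$, so $\Psi(\Phi(m)) = m$. For $\Phi \after \Psi$: given $p$, we have $\Psi(p) = m$ with $p = m \after m^\dag$ by the splitting above, whence $\Phi(m) = m \after m^\dag = p$.

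Finally I would verify that $\Phi$ is an order isomorphism by proving $m \leq n$ iff $m m^\dag \sqsubseteq n n^\dag$. For the forward direction, writing $m = n \after f$ one computes $(m m^\dag) \after (n n^\dag) = n f f^\dag (n^\dag n) f^{\dag}\!\dots$, which collapses using $n^\dag n = \id$ to $n f f^\dag n^\dag = m m^\dag$, so $m m^\dag \sqsubseteq n n^\dag$. For the converse, from $(m m^\dag) \after (n n^\dag) = m m^\dag$ I would multiply by $m^\dag$ on the left and use $m^\dag m = \id$ to obtain $m^\dag n n^\dag = m^\dag$, then take daggers to get $n n^\dag m = m$, so that $m = n \after (n^\dag m)$ witnesses $m \leq n$. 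Since both implications hold, $\Phi$ reflects as well as preserves the order, and the two maps constitute the claimed order isomorphism.
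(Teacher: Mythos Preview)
Your proposal is correct and follows essentially the same route as the paper: send a $\dag$-mono $m$ to $mm^\dag$, send a projection $p$ to the $\dag$-mono part of its epi--$\dag$-mono factorisation, and verify the splitting $p = mm^\dag$ via $pm = m$ and $e = m^\dag p$. The only noteworthy difference is in the converse order argument: the paper infers $\Im(p)\leq\Im(q)$ from $pq=p$ by invoking functoriality of the image, whereas you give the direct computation $nn^\dag m = m$ from $(mm^\dag)(nn^\dag)=mm^\dag$; your version is arguably cleaner. (Minor cosmetic point: the intermediate display in your forward order computation is garbled, but the stated collapse to $nff^\dag n^\dag = mm^\dag$ is correct.)
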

\begin{proof}
  Any closed subobject $m$ yields a projection $mm^\dag$.
  Conversely, any projection $p$ gives a closed subobject $\Im(p)$.
  
  Let us verify that these maps are each others inverses. Starting with a 
  closed subobject represented by $m$, we end up with $\Im(mm^\dag)$. 
  Since $m$ is $\dag$-mono and $m^\dag$ is $\dag$-epi, this is already a
  factorisation in the sense of Lemma~\ref{lem:factorisation}, and hence 
  $\Im(mm^\dag)=m$ as closed subobjects.
  Conversely, a projection $p$ maps to $i i^\dag$,
  where $p$ factors as $p=i e$ for an epi $e:X \to I$ and $\dag$-mono 
  $i=\Im(p)$. By functoriality of the factorisation it follows from $pp=p$ 
  that $pi=i$. Now 
  \[
    i=pi=p^\dag i = (ie)^\dag i = e^\dag i^\dag i = e^\dag,
  \]
  so indeed $ii^\dag = ie = p$.

  Finally let us consider the order. If $m \leq n$ as subobjects, say 
  $m=n\varphi$ for a $\dag$-mono $\varphi$, then $mm^\dag nn^\dag = 
  n\varphi \varphi^\dag n^\dag nn^\dag = nn^\dag nn^\dag = nn^\dag$, so 
  indeed $mm^\dag \sqsubseteq nn^\dag$.
  Conversely, if $p \sqsubseteq q$, then $pq=p$, whence $\Im(pq)=\Im(p)$, so
  that indeed $\Im(p) \leq \Im(q)$ by functoriality of the factorisation.
  \qed
\end{proof}

Consequently, every result we derive about the partial order of closed 
subobjects holds for the projections and vice versa.

\section{Existential quantifier}
\label{sec:existentialquantor}

This section establishes an existential quantifier, \ie a left adjoint
to the inverse image functor that satisfies the Beck-Chevalley condition.

\begin{proposition}
  $\ClSub(X)$ is a lattice.
\end{proposition}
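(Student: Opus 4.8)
The plan is to show that, beyond the meet-semilattice structure with top element $\id[X]$ already exhibited for $\ClSub(X)$, it also possesses all finite joins and a least element, which together make it a lattice. The least element is immediate: the unique map $z:0 \to X$ out of the zero object is a $\dag$-mono, since the only endomorphism of $0$ is $\id[0]=0=z^\dag z$, and it clearly lies below every $\dag$-subobject. So the real content is the construction of binary joins.

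Given $\dag$-monos $m:M \rightarrowtail X$ and $n:N \rightarrowtail X$, I would form the copairing $[m,n]: M \oplus N \to X$ out of the $\dag$-biproduct and factorise it, using the epi/$\dag$-mono factorisation dual to Lemma~\ref{lem:factorisation}, as $[m,n] = \mu \after e$ with $e$ epi and $\mu:J \rightarrowtail X$ a $\dag$-mono. I claim $\mu$ represents $m \vee n$ in $\ClSub(X)$. That $\mu$ is an upper bound is easy: precomposing $[m,n]$ with the two injections $\kappa_M,\kappa_N$ of the biproduct gives $m = \mu \after (e \after \kappa_M)$ and $n = \mu \after (e \after \kappa_N)$, so $m \leq \mu$ and $n \leq \mu$.

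The heart of the argument, and the step I expect to be the main obstacle, is least-upper-boundedness. Let $k:K \rightarrowtail X$ be any $\dag$-mono with $m \leq k$ and $n \leq k$, witnessed by $a:M \to K$ and $b:N \to K$ with $ka=m$ and $kb=n$. Then $k \after [a,b] = [m,n] = \mu \after e$. I would factorise $[a,b] = j \after e'$ with $e'$ epi and $j$ a $\dag$-mono; since composites of $\dag$-monos are again $\dag$-monos (as $(kj)^\dag(kj)=j^\dag k^\dag k j = \id$), the equation $\mu \after e = (k \after j) \after e'$ displays two epi/$\dag$-mono factorisations of the single morphism $[m,n]$. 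By uniqueness of such factorisations up to $\dag$-iso (Lemma~\ref{lem:factorisation} together with Lemma~\ref{lem:dagmonoproperties}d), $\mu$ and $k \after j$ represent the same $\dag$-subobject; since $k \after j$ visibly factors through $k$, this yields $\mu \leq k$. Hence $\mu$ is the join. The only delicate point is keeping careful track of which morphisms are forced to be $\dag$-monos, so that the uniqueness clause of the factorisation genuinely applies; once that bookkeeping is in place the conclusion follows. Conceptually, the same result is also available from Lemma~\ref{lem:subindsub}, since the reflector $\ker(\coker(-))$ is a left adjoint and hence carries any join computed in $\Sub(X)$ to a join in $\ClSub(X)$, but the direct construction above has the advantage of not requiring joins in $\Sub(X)$ to be established first.
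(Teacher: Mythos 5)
Your construction is exactly the paper's: the join of $m$ and $n$ is the image (epi/$\dag$-mono factorisation) of the copairing $[m,n]:M\oplus N\to X$, and the bottom element is $0\rightarrowtail X$. The paper merely delegates the least-upper-bound verification to \cite[4.2.6]{borceux:1}, whereas you carry it out explicitly (and correctly) via uniqueness of the factorisation; this is the same argument, just written out.
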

\begin{proof}
  Since we already know that $\ClSub(X)$ is a meet-semilattice, it
  suffices to show that it has joins and a least element.
  Joins follow from \eg \cite[4.2.6]{borceux:1}. Explicitly,
  $\xymatrix@1@C+2ex{M \vee N\; \ar@{ |>->}^-{m \vee n}[r] &  X}$ is given by
  $\Im(s)$, where $s=[m,n]:M \oplus N \to X$.
  The closed smallest subobject, the bottom element of $\ClSub(X)$, is
  given by $\xymatrix@1{0\; \ar@{ |>->}^-{0}[r] & X}$.
  \qed
\end{proof}

The $\dag$-mono $m:M \rightarrowtail Y$ arising in the factorisation
of a morphism $f:X \to Y$ of $\cat{H}$ is called the \emph{(direct)
image} of $f$, denoted $\mathrm{Im}(f)$. Notice that $\mathrm{Im}(f)$
defines a unique $\dag$-subobject, 
although the representing $\dag$-mono is only unique up to a
$\dag$-iso. This $\dag$-subobject is denoted $\exists_f$. More
generally, for $\xymatrix@1{n:N\; \ar@{ |>->}[r] & X}$ in $\ClSub(X)$, we
define  
\[
  \exists_f(N) = \mathrm{Im}(f n),
\]
which gives a well-defined map $\exists_f : \ClSub(X) \to \ClSub(Y)$ for
any morphism $f:X \to Y$ of $\cat{H}$.

\begin{theorem}
  \label{thm:exists}
  Let $f:X \to Y$ be a morphism of $\cat{H}$. 
  The map $\exists_f:\ClSub(X) \to \ClSub(Y)$ is monotone and
  left-adjoint to $f^{-1} : \ClSub(Y) \to \ClSub(X)$. If $g:Y \to Z$ is
  another morphism then $\exists_g \after \exists_f = \exists_{g
  \after f} : \ClSub(X) \to \ClSub(Z)$. Also $\exists_{\id} = \id$.
\end{theorem}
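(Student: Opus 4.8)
The plan is to establish the Galois connection $\exists_f \dashv f^{-1}$ first, since monotonicity of $\exists_f$ then follows automatically (a left adjoint between posets is always order-preserving), and to treat the functoriality and unit clauses afterwards. Throughout I would lean on two consequences of Lemma~\ref{lem:factorisation}. The first is that $\mathrm{Im}(h)$ is the \emph{least} closed subobject through which $h$ factors: if $h = m'\after g$ for some $\dag$-mono $m'$, then factoring $g = m_g \after e_g$ (epi followed by $\dag$-mono) gives $h = (m'\after m_g)\after e_g$ with $m'\after m_g$ again a $\dag$-mono and $e_g$ epi, so by uniqueness of factorisation the canonical $\dag$-mono $\mu$ of $h$ satisfies $\mu = (m'\after m_g)\after\phi$ for a $\dag$-iso $\phi$, hence factors through $m'$; thus $\mathrm{Im}(h) \leq M'$ in $\ClSub$. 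The second, a small variant of the same computation, is that $\mathrm{Im}(h \after e) = \mathrm{Im}(h)$ whenever $e$ is epi, which I would record separately for the functoriality step.

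For the adjunction I would fix representing $\dag$-monos $n:N \rightarrowtail X$ and $m:M \rightarrowtail Y$, write the factorisation $fn = \mu \after e$ with $e$ epi and $\mu$ representing $\exists_f(N) = \mathrm{Im}(fn)$, and take the pullback $(P,p,q)$ of $m$ along $f$, so that $p$ represents $f^{-1}(M)$ and $f\after p = m\after q$. For the implication $N \leq f^{-1}(M) \Rightarrow \exists_f(N) \leq M$, a witness $\beta:N \to P$ with $p\after\beta = n$ yields $fn = f\after p\after\beta = m\after(q\after\beta)$, so $fn$ factors through the $\dag$-mono $m$, and the least-image observation gives $\exists_f(N) \leq M$. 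For the converse, an inequality $\exists_f(N) \leq M$ supplies $\alpha$ with $m\after\alpha = \mu$, whence $fn = m\after(\alpha\after e)$, and the universal property of the pullback produces $\beta:N \to P$ with $p\after\beta = n$, exhibiting $N \leq f^{-1}(M)$. This establishes the Galois connection, and with it monotonicity of $\exists_f$.

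Functoriality is then quick. Keeping $fn = \mu\after e$ as above, $\exists_g(\exists_f(N)) = \mathrm{Im}(g\after\mu)$, while $\exists_{g\after f}(N) = \mathrm{Im}(g\after f\after n) = \mathrm{Im}(g\after\mu\after e)$; since $e$ is epi, the observation $\mathrm{Im}(h\after e) = \mathrm{Im}(h)$ with $h = g\after\mu$ identifies the two. For $\exists_{\id}$, a $\dag$-mono $n$ is already its own factorisation, so $\mathrm{Im}(\id\after n) = \mathrm{Im}(n) = N$.

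I expect the main obstacle to be the converse direction of the adjunction, where the two universal properties must be kept cleanly separated: the least-image observation does the real work of translating ``$fn$ factors through a $\dag$-mono'' into an order inequality in $\ClSub$, and it must be set up carefully (with the uniqueness-up-to-$\dag$-iso clause of Lemma~\ref{lem:factorisation}) before the pullback is invoked. Once $\mathrm{Im}(h\after e) = \mathrm{Im}(h)$ and the least-image property are in hand, the remaining clauses are essentially bookkeeping.
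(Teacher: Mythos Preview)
Your argument is correct, and at its core it is the same as the paper's: both directions of the adjunction are obtained by combining the pullback that defines $f^{-1}(M)$ with the epi--$\dag$-mono factorisation that defines $\exists_f(N)$. The differences are purely organisational. The paper establishes monotonicity of $\exists_f$ first by a direct double-factorisation argument, then proves the adjunction, and finally dispatches functoriality with the one-liner ``left adjoints compose''. You reverse the first two steps, getting monotonicity for free from the Galois connection, and then prove $\exists_g \after \exists_f = \exists_{g \after f}$ by hand via the observation $\mathrm{Im}(h \after e) = \mathrm{Im}(h)$ for $e$ epi. Your route saves the separate monotonicity computation; the paper's route saves the explicit functoriality computation. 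Your explicit isolation of the least-closed-subobject property of $\mathrm{Im}(h)$ makes the backward implication of the adjunction cleaner than the paper's rather compressed ``factorise the map $M \to N$ to get the image'' step, which is really the same argument left implicit.
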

\begin{proof}
  We follow the proof of \cite[Lemma~2.5]{butz:regular}. 
  For monotonicity of $\exists_f$ let $M \leq N$ in $\ClSub(X)$. First
  factorise $n$ and then $M \to \exists_fN$ to get the following diagram.
  \[\xymatrix{
    X \ar^-{f}[r] & Y \\
    N \ar@{->>}[r] \ar@{ |>->}_-{n}[u] & \exists_f N \ar@{ |>->}[u] \\
    M \ar@{->>}[r] \ar@{ |>->}[u] \ar@(l,l)@{ |>->}^-{m}[uu] &
    I \ar@{ |>->}[u]
  }\]
  Now $\xymatrix@1{M \ar@{->>}[r] & I\; \ar@{ |>->}[r] & Y}$ is an
  epi-$\dag$-mono factorisation of $fm$, so $I$ represents
  $\exists_fM$, and $\exists_fM \leq \exists_fN$.

  To show the adjunction, let $M \in \ClSub(X)$ and $N \in \ClSub(Y)$,
  and consider the solid arrows in the following diagram.
  \[\xymatrix{
    & X \ar^-{f}[rr] && Y \\
    & f^{-1}N \pullback[ur] \ar@{ |>->}[u] \ar|(.775)\hole[rr] 
    && N \ar@{ |>->}_-{n}[u] \\
    M \ar@{->>}[rr] \ar@{ |>->}^-{m}[uur] \ar@{-->}[ur]
    && \exists_f M \ar@{-->}[ur] \ar@{ |>->}[uur]
  }\]
  If $\exists_f M \leq N$ then the right dashed map $\exists_f M \to N$
  exists and the outer square commutes. Hence, since $f^{-1}N$ is a
  pullback, the left dashed map $M \to f^{-1}N$ exists, and $M \leq
  f^{-1}N$. Conversely, if $M \leq f^{-1}N$, factorise the map $M \to
  N$ to get the image of $M$ under $f$. In particular, this image then
  factors through $N$, whence $\exists_f M \leq N$.

  Finally, the identity $\exists_g \after \exists_f = \exists_{g
  \after f}$ just states how left adjoints compose.
  \qed
\end{proof}

\subsection{The Beck-Chevalley condition}

Recall the \emph{Beck-Chevalley condition}: if the left square
below is a pullback, then the right one must commute.
\begin{equation}
\label{eq:beckchevalley}\tag{BC}
  \raise5ex\hbox{\xymatrix{
    P \pullback \ar^-{q}[r] \ar_-{p}[d] & Y \ar^-{g}[d] \\
    X \ar_-{f}[r] & Z    
  }} \qquad \Rightarrow \qquad
  \raise5ex\hbox{\xymatrix{
    \ClSub(P) \ar_-{\exists_p}[d] & \ClSub(Y) \ar^-{\exists_g}[d]
    \ar_-{q^{-1}}[l] \\
    \ClSub(X) & \ClSub(Z) \ar^-{f^{-1}}[l] 
  }}
\end{equation}
It ensures that the semantics of the existential quantifier is sound
with respect to substitution. To show that our $\exists_f$
satisfies~\eqref{eq:beckchevalley}, we will assume that the monoidal
unit $C$ of our pre-Hilbert category $\cat{H}$ is a simple generator.
Recall that an object $C$ is called a \emph{generator} when $fx=gx$
for all $x:C \to X$ implies $f=g:X \to Y$. It is called \emph{simple}
when $\Sub(C) = \{0,C\}$. In this case,
\cite[Theorem~4.6]{heunen:hilbcatsembedding} shows that $\cat{H}$ is
enriched over Abelian groups, so that we can talk of adding and
subtracting morphisms.

\begin{lemma}
  In a pre-Hilbert category whose monoidal unit is a simple generator,
  epi's are stable under pullback.
\end{lemma}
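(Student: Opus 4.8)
The plan is to realise the pullback through the biproduct--$\dag$-equaliser presentation and then imitate the usual Abelian-category argument, letting the simple generator supply the ingredient that an Abelian category would get for free. Given $f\colon X\to Z$ and an epi $g\colon Y\to Z$, I would write their pullback as $P=\ker(h)$ for $h=f\after\pi_X - g\after\pi_Y\colon X\oplus Y\to Z$, where $\pi_X,\pi_Y$ are the biproduct projections; the two legs are then $p=\pi_X\after\ker(h)$ and $q=\pi_Y\after\ker(h)$. Subtraction of morphisms is legitimate because, the monoidal unit being a simple generator, $\cat{H}$ is enriched over Abelian groups. The first thing to record is that $h$ is itself epi: precomposing $h$ with the injection $\kappa_Y$ of the $Y$-summand gives $h\after\kappa_Y=-g$, which is epi, and a morphism that becomes epi after precomposition with a biproduct injection is itself epi.

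To conclude that $p$ is epi I would invoke the dual of Lemma~\ref{lem:Abepi}: it suffices to show that $w\after p=0$ forces $w=0$ for every $w\colon X\to W$. Fix such a $w$. From $w\after p=0$ we get $w\after\pi_X\after\ker(h)=0$, so $w\after\pi_X$ annihilates $\ker(h)$ and hence factors through $\coker(\ker(h))$. \emph{Provided} $h$ is the cokernel of its own kernel, this is a factorisation $w\after\pi_X=w'\after h$ through $h$ itself; composing with the two injections then yields $w=w\after\pi_X\after\kappa_X=w'\after f$ and $0=w\after\pi_X\after\kappa_Y=-w'\after g$, so $w'\after g=0$, whence $w'=0$ since $g$ is epi, and therefore $w=0$.

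The hard part is exactly the proviso in the middle of the previous paragraph, namely that the epi $h$ equals $\coker(\ker(h))$. Lemma~\ref{lem:factorisation} grants this for $\dag$-epis, but $h=f\after\pi_X-g\after\pi_Y$ is in general not a $\dag$-epi, and in a merely pre-Hilbert category an epi need not be a cokernel; this is the genuine obstacle on which the whole lemma turns. I expect to have to spend the simplicity of $C$ here. Because $C$ is a simple generator, every nonzero morphism out of $C$ is monic, so for a generalised element $x\colon C\to X$ the composite $f\after x\colon C\to Z$ is either zero or monic, i.e.\ ``one-dimensional''. The remaining task is then to lift such a one-dimensional element through the epi $g$, producing $y\colon C\to Y$ with $g\after y=f\after x$; given the lift, the cone $(x,y)$ induces $z\colon C\to P$ with $p\after z=x$, whence $w\after x=w\after p\after z=0$ for every $x$ and $w=0$ by the generator property. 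Establishing this lifting --- equivalently, that the relevant epi is a cokernel --- is the crux, and it is precisely here that both the simplicity of the generator and the Abelian-group enrichment must be brought fully to bear.
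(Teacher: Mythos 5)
Your setup is correct and the bookkeeping around it is sound: realising the pullback as $\ker(h)$ for $h=f\after\pi_X-g\after\pi_Y$ (legitimate because the simple generator gives additive enrichment), noting that $h$ is epi since $h\after\kappa_Y=-g$ is, and reducing the whole lemma to the single claim that the epi $h$ is the cokernel of its own kernel. But you never prove that claim; your last paragraph names it as ``the crux'' and gestures at lifting generalised elements $C\to Z$ through $g$ without carrying the lift out. As it stands this is therefore not a proof but an accurate diagnosis of where a proof would have to do its work. For comparison, the paper offers no more than you do: its entire proof is the assertion that the argument of Borceux, Proposition~1.7.6, ``works verbatim'' --- and that is exactly the abelian-category argument you outline, which needs every epi to be regular, a property Lemma~\ref{lem:factorisation} supplies only for $\dag$-epis.

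The reason you could not close the gap is that it cannot be closed. In $\Hilb$ (whose unit $\field{C}$ is a simple generator) the epis are the bounded maps with dense range, and these are neither regular nor stable under pullback. Concretely, let $g:\ell^2\to\ell^2$ send $e_n$ to $e_n/n$; this is epi but not surjective, and $z_0=\sum_n e_n/n$ lies in $\ell^2$ but outside the range of $g$. Let $f:\field{C}\to\ell^2$ send $1$ to $z_0$. The pullback of $f$ and $g$ is $\{(\lambda,y):\lambda z_0=g(y)\}=0$, so the pulled-back leg $0\to\field{C}$ is not epi. Hence the lemma fails for genuine epis; what survives is stability of $\dag$-epis (these are split by $g^\dag$, and split epis pull back to split epis in any category with pullbacks, via the cone $\langle\id,g^\dag\after f\rangle$), which in $\Hilb$ are the coisometries. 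Your suspicion that the simplicity of $C$ and the enrichment would have to be ``brought fully to bear'' on the lifting step was well placed: no such argument exists, and any purported proof of the statement as given must go wrong at precisely the point you flagged.
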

\begin{proof}
  The proof of~\cite[Proposition~1.7.6]{borceux:1} works verbatim.
  \qed
\end{proof}

The previous lemma entails that $\cat{H}$ is a \emph{regular
category}, and hence that all results of~\cite{butz:regular} apply.
Thus, in such a category $\cat{H}$ one can soundly interpret regular
logic, in particular the existential quantifier.

\begin{theorem}
\label{thm:beckchevalley}
  In a pre-Hilbert category whose monoidal unit is a simple generator,
  \eqref{eq:beckchevalley} holds.  
\end{theorem}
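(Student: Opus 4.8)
The plan is to prove that the two closed subobjects $\exists_p(q^{-1}N)$ and $f^{-1}(\exists_g N)$ of $X$ coincide for every $N \in \ClSub(Y)$, which is precisely commutativity of the right-hand square of~\eqref{eq:beckchevalley}. The inequality $\exists_p(q^{-1}N) \leq f^{-1}(\exists_g N)$ is formal and holds for any commutative square: since $\exists_p \dashv p^{-1}$ it reduces to $q^{-1} \leq p^{-1}f^{-1}\exists_g = q^{-1}g^{-1}\exists_g$, which follows from the unit inequality $\id \leq g^{-1}\exists_g$ together with monotonicity of $q^{-1}$ (here $p^{-1}f^{-1} = q^{-1}g^{-1}$ because $f \after p = g \after q$). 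The substance is the reverse inequality, and I would in fact obtain the equality directly by exhibiting a suitable factorisation, following the regular-category argument of \cite[Lemma~2.5]{butz:regular}.

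Concretely, represent $N$ by a $\dag$-mono $n$ and factorise $g \after n = m \after e$ into an epi $e$ followed by a $\dag$-mono $m$, so that $m$ represents $\exists_g N$ (Lemma~\ref{lem:factorisation}). Let $a:q^{-1}N \rightarrowtail P$ and $j:q^{-1}N \to N$ be the pullback of $n$ along $q$, and form $f^{-1}(\exists_g N)$ as the pullback of $m$ along $f$, with mono $m_f$ into $X$ and second projection $\pi$ onto $\exists_g N$; note that $m_f$ is a $\dag$-mono by Lemma~\ref{lem:dsubstableunderpullbacks}. By definition $\exists_p(q^{-1}N) = \mathrm{Im}(p \after a)$. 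The heart of the argument is to recognise the comparison map: pasting the pullback square $q \after a = n \after j$ on top of the given pullback $f \after p = g \after q$ shows that $q^{-1}N$ is the pullback of $f$ and $g \after n = m \after e$, and pasting once more exhibits $q^{-1}N$ as the pullback of $e$ along $\pi:f^{-1}(\exists_g N) \to \exists_g N$. Consequently the induced map $t:q^{-1}N \to f^{-1}(\exists_g N)$, characterised by $m_f \after t = p \after a$, is precisely a pullback of $e$.

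Since $e$ is epi and, by the preceding lemma, epis are stable under pullback in $\cat{H}$, the map $t$ is epi. Thus $p \after a = m_f \after t$ with $t$ epi and $m_f$ a $\dag$-mono, which is an epi-$\dag$-mono factorisation of $p \after a$. By uniqueness of such factorisations up to $\dag$-iso (Lemma~\ref{lem:factorisation}), its $\dag$-mono part represents both $\mathrm{Im}(p \after a) = \exists_p(q^{-1}N)$ and $f^{-1}(\exists_g N)$, which yields the desired equality. I expect the only genuinely non-formal step to be the double pullback-pasting that presents $t$ as a pullback of $e$; this is exactly where the hypothesis that the monoidal unit is a simple generator enters, via the stability of epis that makes $t$ epi.
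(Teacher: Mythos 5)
Your argument is correct and is essentially the paper's own proof: the paper simply remarks that the proof of \cite[Lemma~2.9]{butz:regular} for regular categories works verbatim, and what you have written out --- pasting pullbacks to present the comparison map $t$ as a pullback of the epi $e$, invoking pullback-stability of epis (where the simple-generator hypothesis enters) to make $t$ epi, and concluding by uniqueness of epi-$\dag$-mono factorisations --- is exactly that argument. The only cosmetic difference is that you spell out the details the paper delegates to the reference.
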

\begin{proof}
  The proof of~\cite[Lemma~2.9]{butz:regular} works verbatim. 
  \qed
\end{proof}

Also the \emph{Frobenius identity} holds. Let $f:X \to Y$ be a
morphism of $\Cat{Hilb}$. Let $M \in \ClSub(X)$ and $N \in
\ClSub(Y)$. Then $\exists_f(M \wedge f^{-1}N) = \exists_fM \wedge N$
as $\dag$-subobjects of $Y$. For a proof, we refer
to~\cite[Lemma~2.6]{butz:regular}. 


\section{Orthogonality}
\label{sec:orthogonality}

We will now recover the orthogonal subspace construction from the
$\dag$-functor in any pre-Hilbert category. The idea is to mimick the
fact that $\ker(f)^\perp = \Im(f^\dag)$ in $\Cat{Hilb}$. 

\begin{proposition}
  \label{lem:perpfromdagger}
  There is an involutive functor $(-)^\perp : \ClSub(X)\op \to \ClSub(X)$
  determined by $m^\perp = \ker(m^\dag)$ for $m \in \ClSub(X)$.
\end{proposition}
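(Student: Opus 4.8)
The plan is to verify three things: that $m \mapsto \ker(m^\dag)$ gives a well-defined map $\ClSub(X) \to \ClSub(X)$, that this map reverses the order (which is exactly functoriality $\ClSub(X)\op \to \ClSub(X)$ for posets), and that it squares to the identity. The first is almost immediate: for a $\dag$-mono $m$ the morphism $m^\dag$ is a $\dag$-epi, and its kernel $\ker(m^\dag)$ is a $\dag$-mono by the very axioms of a pre-Hilbert category, so it represents an element of $\ClSub(X)$. For independence of the representative, if $m' = m\after\varphi$ for an isomorphism $\varphi$ (necessarily a $\dag$-iso by Lemma~\ref{lem:dagmonoproperties}d), then $m'^\dag = \varphi^\dag \after m^\dag$ with $\varphi^\dag$ an isomorphism, and precomposing with an isomorphism leaves the kernel unchanged, so $\ker(m'^\dag) = \ker(m^\dag)$.

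For order-reversal, suppose $m \leq n$ in $\ClSub(X)$, witnessed by $f$ with $n\after f = m$. Taking daggers gives $m^\dag = f^\dag \after n^\dag$, so $m^\dag \after \ker(n^\dag) = f^\dag \after n^\dag \after \ker(n^\dag) = 0$. Hence $\ker(n^\dag)$ factors through $\ker(m^\dag)$, which is precisely the statement $n^\perp \leq m^\perp$. Thus $(-)^\perp$ is order-reversing, as required for a functor out of the opposite order.

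For involutivity I would compute $(m^\perp)^\perp = \ker\bigl((m^\perp)^\dag\bigr)$. Using the defining relation $\coker(f) = \ker(f^\dag)^\dag$ with $f = m$, we get $(m^\perp)^\dag = \ker(m^\dag)^\dag = \coker(m)$, so that $(m^\perp)^\perp = \ker(\coker(m))$. It then remains to see that $\ker(\coker(m)) = m$ for a $\dag$-mono $m$; this is the crux. It follows by dualising the last clause of Lemma~\ref{lem:factorisation}: since $m^\dag$ is a $\dag$-epi it equals $\coker(\ker(m^\dag))$, and taking daggers, using $\coker(g)^\dag = \ker(g^\dag)$, yields $m = \ker(\coker(m))$. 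Granting this, $(m^\perp)^\perp = m$, so $(-)^\perp$ is an involution, hence an order-anti-isomorphism and in particular a functor that is its own inverse.

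I expect the one genuinely load-bearing step to be the identity $m = \ker(\coker(m))$ for $\dag$-monos; everything else is bookkeeping with daggers. The care needed there is to route it through the factorisation lemma rather than invoking an abelian-category fact directly, since the ambient structure is only that of a pre-Hilbert category.
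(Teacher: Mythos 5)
Your proof is correct and follows essentially the same route as the paper: order-reversal via $m^\dag \after \ker(n^\dag) = 0$, and involutivity by reducing $m^{\perp\perp}$ to $\ker(\coker(m)) = m$, which the paper also obtains by (dualising) the last clause of Lemma~\ref{lem:factorisation}. You additionally spell out well-definedness on representatives, which the paper leaves implicit; the only nitpick is that the relevant fact there is that \emph{post}composing $m^\dag$ with the mono $\varphi^\dag$ leaves the kernel unchanged.
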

\begin{proof}
  To show that the above definition extends functorially, let $m,n \in
  \ClSub(X)$ be such that $m \leq n$. Say that $m$ factors through $n$
  by $m=ni$ for $i:M \to N$. Then
  \[
      m^\dag \after \ker(n^\dag) 
    = i^\dag \after n^\dag \after \ker(n^\dag)
    = i^\dag \after 0
    = 0.
  \]
  Hence $\ker(n^\dag)$ factors through $\ker(m^\dag)$, that is,
  $n^\perp \leq m^\perp$.

  We finish the proof by showing that $\perp$ is involutive:
  \[
      m^{\perp\perp} 
    = (\ker(m^\dag))^\perp
    = \ker(\ker(m^\dag)^\dag)
    = \ker(\coker(m))
    = m.
  \]
  Here, the last equation follows from Lemma~\ref{lem:factorisation}.
  \qed
\end{proof}

The functor $(-)^\perp$ cooperates with $\wedge$ and $\vee$ as expected.

\begin{lemma}
  \label{lem:orthocomplement}
  $\ClSub(X)$ is an orthocomplemented lattice, that is,
  $m \wedge m^\perp = 0$ and $m \vee m^\perp=1$ for all $m \in
  \ClSub(X)$.
  (A forteriori, the cotuple $[m,m^\perp]$ is a $\dag$-iso.)
\end{lemma}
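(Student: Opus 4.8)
The plan is to establish the two identities separately, exploiting the order isomorphism $\ClSub(X) \cong \mathrm{Proj}(X)$ and the involution $(-)^\perp$ from Proposition~\ref{lem:perpfromdagger}. For the meet $m \wedge m^\perp = 0$, I would argue that any closed subobject below both $m$ and $m^\perp = \ker(m^\dag)$ must be zero. Concretely, suppose $k \in \ClSub(X)$ satisfies $k \leq m$ and $k \leq \ker(m^\dag)$. From $k \leq m$ we can write $k = m a$ for some $a$, and from $k \leq \ker(m^\dag)$ we get $m^\dag \after k = 0$. Substituting gives $m^\dag m a = 0$; but $m$ is a $\dag$-mono, so $m^\dag m = \id$, forcing $a = 0$ and hence $k = 0$. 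This shows the only lower bound of $m$ and $m^\perp$ is the bottom element, which is exactly $m \wedge m^\perp = 0$.

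For the join $m \vee m^\perp = 1$, the cleanest route is to prove the parenthetical claim first, namely that the cotuple $[m, m^\perp] : M \oplus M^\perp \to X$ is a $\dag$-iso, and then invoke the explicit description of joins: by the Proposition on $\ClSub(X)$ being a lattice, $m \vee m^\perp = \Im([m,m^\perp])$, and a $\dag$-iso has image all of $X$. So the task reduces to showing $[m,m^\perp]$ is a $\dag$-iso. I would check the two defining equations. Writing $m^\perp = \ker(m^\dag) =: n$, one has
\[
  [m,n] [m,n]^\dag = [m,n] \begin{bmatrix} m^\dag \\ n^\dag \end{bmatrix} = m m^\dag + n n^\dag,
\]
using that the biproduct is a $\dag$-biproduct so that $[m,n]^\dag = {\langle m^\dag, n^\dag\rangle}$ and composition with the cotuple splits as a sum. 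Dually, $[m,n]^\dag [m,n]$ is the matrix with diagonal entries $m^\dag m = \id$, $n^\dag n = \id$ and off-diagonal entries $m^\dag n$ and $n^\dag m$; since $n = \ker(m^\dag)$ we have $m^\dag n = 0$, and taking daggers $n^\dag m = 0$ as well, so this composite is $\id_{M \oplus M^\perp}$. That makes $[m,n]$ a $\dag$-mono.

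The main obstacle is the remaining equation $m m^\dag + n n^\dag = \id_X$, equivalently that the two projections $p = m m^\dag$ and $p^\perp = n n^\dag$ are complementary. I expect to prove this by showing $p + p^\perp$ is idempotent and self-adjoint with trivial kernel, then applying Lemma~\ref{lem:Abepi} together with Lemma~\ref{lem:dagmonoproperties}(a) to conclude it is $\id_X$; self-adjointness is immediate, and idempotence follows from $p p^\perp = m (m^\dag n) n^\dag = 0$ and its dagger. The heart of the matter is triviality of the kernel: I would show that if $x : C \to X$ is killed by both $m^\dag$ and $n^\dag = \coker(m)$, then $x = 0$. This is precisely where the $\dag$-kernel axiom does its work---the equation $\ker(\coker(m)) = m$ from Lemma~\ref{lem:factorisation} says $m^\perp{}^\perp = m$, so $x$ lying in $\ker(m^\dag) \cap \ker(\coker(m))$ lands in $m^{\perp} \wedge m^{\perp\perp} = m^\perp \wedge m$, which we already showed is $0$. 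Thus $p + p^\perp$ is a mono idempotent, hence $\id_X$, completing the argument and simultaneously delivering the $\dag$-iso needed for the join.
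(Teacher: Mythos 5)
Your proof is correct, and the join half takes a genuinely different route from the paper's. For the meet you argue that any common lower bound $k$ of $m$ and $\ker(m^\dag)$ satisfies $k=ma$ with $m^\dag m a = 0$, hence $k=0$; this is the same computation the paper performs directly on the pullback projections. For the join, the paper first shows $[m,m^\perp]$ is \emph{epi} by a cokernel argument ($f \after [m,m^\perp]=0$ forces $f$ to factor through $\coker(m)=\ker(m^\dag)^\dag$ and then to vanish), reads off $m \vee m^\perp = \mathrm{Im}([m,m^\perp]) = 1$, and only afterwards records the matrix computation $[m,m^\perp]^\dag \after [m,m^\perp]=\id$ to upgrade to a $\dag$-iso. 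You instead compute both composites with the dagger: the $\dag$-mono direction is the paper's matrix computation verbatim, and the nontrivial direction $[m,m^\perp]\after[m,m^\perp]^\dag = mm^\dag + m^\perp(m^\perp)^\dag = \id[X]$ is handled by showing this self-adjoint idempotent has zero kernel, hence is mono by Lemma~\ref{lem:Abepi}, hence equals $\id$ by cancelling the idempotent against itself. Your kernel-triviality step nicely recycles $m \wedge m^\perp = 0$ together with $\ker(\coker(m))=m$ from Lemma~\ref{lem:factorisation}, so the join really is reduced to the meet. Two small points. First, the additive calculus you invoke ($f+g$, bilinearity of composition, $\id = \kappa_1\pi_1+\kappa_2\pi_2$, $[m,n]\after\langle m^\dag,n^\dag\rangle = mm^\dag+nn^\dag$) is available in any category with finite biproducts via the canonical commutative-monoid enrichment; you never need subtraction or the simple-generator hypothesis, so this is legitimate in the setting of the lemma --- but it is worth saying explicitly, since the paper only discusses enrichment later and under the extra hypothesis. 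Second, run the kernel argument for an arbitrary $x:Z \to X$ rather than $x:C \to X$: Lemma~\ref{lem:Abepi} quantifies over all morphisms, nothing in your argument uses $C$, and testing only against the monoidal unit would tacitly smuggle in the generator assumption that this lemma does not make. With those cosmetic repairs the argument is complete, and arguably yields the parenthetical $\dag$-iso claim more directly than the paper does.
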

\begin{proof}
  Recall that $m \wedge m^\perp$ is defined as the $\dag$-pullback
  \[\xymatrix@C+3ex@R+1ex{
      M \wedge M^\perp \pullback \ar@{ |>-->}^-{p}[r] \ar@{ |>-->}_-{q}[d] 
    & M^\perp \ar@{ |>->}^-{\ker(m^\dag)}[d] \\
      M \ar@{ |>->}_-{m}[r] & X
  }\]
  Because $m$ is a $\dag$-mono, we have $q = m^\dag \after m \after q
  = m^\dag \after \ker(m^\dag) \after p = 0 \after p = 0$. 
  Hence $m \wedge m^\perp = m \after q = m \after 0 = 0$.

  To prove the second claim, let $f$ satisfy $f \after
  [m,m^\perp]=0$. 
  Then $f \after m=0$, so $f$ factors through $\coker(m)$ as $f=g
  \after \coker(m)$. 
  Also $f \after m^\perp=0$, so 
  \[
      g 
    = g \after \ker(m^\dag)^\dag \after \ker(m^\dag) 
    = f \after \ker(m^\dag) 
    = 0,
  \]
  whence $f=0$.
  So, by Lemma~\ref{lem:Abepi}, $[m,m^\perp]$ is epi. 
  Hence $[m,m^\perp]$ factors as $\id \after [m,m^\perp]$, but also as
  $(m \vee m^\perp) \after p$. So $m \vee m^\perp$ must be a
  $\dag$-iso. That is, $m \vee m^\perp=1$.
  
  Let us prove that $[m,m^\perp]$ is also a $\dag$-mono, and hence
  even a $\dag$-iso:
  \begin{align*}
        [m,m^\perp]^\dag \after [m,m^\perp]
    & = \langle m^\dag, \ker(m^\dag)^\dag \rangle \after [m, \ker(m^\dag)] \\
    & = \left( \begin{array}{cc}
          m^\dag \after m & m^\dag \after \ker(m^\dag) \\
          \ker(m^\dag)^\dag \after m & \ker(m^\dag)^\dag \after \ker(m^\dag) 
        \end{array} \right) \\
    & = \id[M \oplus M^\perp].
  \qedhere
  \end{align*}
\end{proof}

However, $(-)^\perp$ has poor `substitution properties', as it does
not commute with pullbacks. For a counterexample in $\Hilb$, let
$X=\field{C}^2$, $Y=\field{C}$, $f=\pi:X\to Y : (x,y) \mapsto x$ and
$m=0:0 \to Y$. Then $f^{-1}(m^\perp) = \field{C}^2$, but
$(f^{-1}(m))^\perp = \{(x,0) \mid x \in \field{C}\}$.

In spite of this, a special case of ``$(-)^\perp$ is stable under
pullbacks'' still holds: we now recover orthomodularity of $\ClSub(X)$
using the previous lemma. 

\begin{theorem}
  $\ClSub(X)$ is an orthomodular lattice, that is, $m \vee (m^\perp
  \wedge n) = n$ whenever $m \leq n \in \ClSub(X)$.
\end{theorem}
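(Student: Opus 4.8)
The strategy is to exhibit $N$ as an internal orthogonal biproduct of $M$ and the portion of $N$ orthogonal to $M$, and then compute the join directly. Since $m \leq n$, there is $i:M \to N$ with $m = n \after i$. As $m$ and $n$ are $\dag$-monos, $i^\dag \after i = i^\dag \after n^\dag \after n \after i = m^\dag \after m = \id$, so $i$ is itself a $\dag$-mono and represents a $\dag$-subobject of $N$. Write $i^\perp = \ker(i^\dag):M' \to N$ for the orthocomplement of $i$ formed inside $\ClSub(N)$.

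The crux is to show that the meet $m^\perp \wedge n$, which is formed in $\ClSub(X)$, is represented by the composite $n \after i^\perp : M' \to X$. Note first the identity $m^\dag \after n = i^\dag \after n^\dag \after n = i^\dag$. For one inequality, $n \after i^\perp$ factors through $n$, and $m^\dag \after n \after i^\perp = i^\dag \after i^\perp = i^\dag \after \ker(i^\dag) = 0$ shows it also factors through $m^\perp = \ker(m^\dag)$; hence $n \after i^\perp \leq m^\perp \wedge n$. For the reverse, represent $m^\perp \wedge n$ by some $k$; since $k \leq n$ the argument used for $i$ above makes $k = n \after j$ for a $\dag$-mono $j$, and $k \leq m^\perp$ gives $i^\dag \after j = m^\dag \after n \after j = m^\dag \after k = 0$, so $j$ factors through $\ker(i^\dag) = i^\perp$ and therefore $k \leq n \after i^\perp$. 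Thus $m^\perp \wedge n = n \after i^\perp$ in $\ClSub(X)$.

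With this identification I would finish quickly. Applying Lemma~\ref{lem:orthocomplement} inside $\ClSub(N)$ yields a $\dag$-iso $[i, i^\perp]:M \oplus M' \to N$. Recalling that the join of two $\dag$-subobjects is the image of the cotuple of their representatives, $m \vee (m^\perp \wedge n) = \mathrm{Im}([m, n \after i^\perp]) = \mathrm{Im}(n \after [i, i^\perp])$. Since $n$ is a $\dag$-mono and $[i, i^\perp]$ a $\dag$-iso, their composite is a $\dag$-mono representing the same $\dag$-subobject as $n$, so its image is $n$ itself; that is, $m \vee (m^\perp \wedge n) = n$.

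The main obstacle is precisely the identification $m^\perp \wedge n = n \after i^\perp$: it requires transporting the orthocomplement taken inside the ambient object $X$ to the orthocomplement taken inside $N$, which is exactly what makes the otherwise-failing stability of $(-)^\perp$ under pullback hold in this special case. Everything else is bookkeeping with the factorisation system and Lemma~\ref{lem:orthocomplement}.
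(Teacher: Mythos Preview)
Your proof is correct and takes a closely related but genuinely different route from the paper's. Both arguments reduce to showing that a cotuple map $M \oplus (M^\perp \wedge N) \to N$ is an isomorphism, so that $n$ is already the image of the cotuple defining the join. The paper obtains this by applying Lemma~\ref{lem:orthocomplement} in $\ClSub(X)$ to conclude that $[m, m^\perp]:M \oplus M^\perp \to X$ is a $\dag$-iso, and then assembling an explicit pullback square exhibiting $M \oplus (M^\perp \wedge N)$ as the pullback of this iso along $n$; stability of isomorphisms under pullback finishes the argument. You instead apply Lemma~\ref{lem:orthocomplement} inside $\ClSub(N)$ to the $\dag$-mono $i$, obtaining the $\dag$-iso $[i, i^\perp]$ directly, and then identify the ambient meet $m^\perp \wedge n$ with $n \after i^\perp$ by hand via the identity $m^\dag \after n = i^\dag$. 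Your route is more direct and makes explicit the ``transport of orthocomplement along a $\dag$-mono'' that the paper's pullback diagram encodes implicitly; the paper's version, on the other hand, stays entirely at the level of universal properties and never needs to verify the two inequalities for the meet separately.
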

\begin{proof}
  Say that $m$ factors through $n$ as $m=nd$. Let $(p,q)$ be the
  pullback of $\ker(m^\dag)$ along $n$. Then $n \after [q,d] = [nq,nd]
  = [p \after \ker(m^\dag), m]$. Hence, if we can show that $[q,d]$ is
  epi, this would be the factorisation of $[p \after \ker(m^\dag),m]$,
  and $m \vee (m^\perp \wedge n)=n$. 

  First we show that $M \oplus (M^\perp \wedge N)$ is a
  pullback of $n$ and $[m,m^\perp]$. Let $f$ and $g$ satisfy
   $n \after g = [m, \ker(m^\dag)] \after f$.
  Then there are unique $h_1,h_2$ making the following diagram
  commute. 
  \[\xymatrix@R+2ex@C+2ex{
    &&& P \ar_-{g}[d] \ar@{-->}^-{h_2}[dr] \ar@{-->}_-{h_1}[dl] 
      \ar `[rr] 
          `[ddrr]^{\pi_2 \after f}
           [ddr] 
      \ar `^d[ll]
          `^r[ddll]_{\pi_1 \after f}
           [ddl]
    &&
    \\
    &&M^\perp \wedge N \pullback \ar@{ |>->}^-{q}[r] \ar@{ |>->}_-{p}[d]
    & N \ar@{ |>->}_-{n}[d]
    & M \ar@{ |>->}_-{d}[l] \ar@{=}^-{\id}[d] \pullback[ld] && \\
    &&M^\perp \ar@{ |>->}_-{\ker(m^\dag)}[r] & X & M \ar@{ |>->}^-{m}[l] &&
  }\]
  This makes the following into a pullback diagram.
  \[\xymatrix{
    P \ar@(r,u)^-{g}[drr] \ar@{-->}^{\langle h_1,h_2 \rangle}[dr] 
      \ar@(d,l)_-{f}[ddr]\\
    & M \oplus (M^\perp \wedge N) \pullback \ar^-{[d,q]}[r] \ar_-{\id
    \oplus p}[d] & N \ar@{ |>->}^-{n}[d] \\
    & M \oplus M^\perp \ar_-{[m,\ker(m^\dag)]}[r] & X
  }\]
  Since isomorphisms are stable under pullback, $[d,q]$ is iso. In
  particular, it is epi, and the theorem is established.
  \qed
\end{proof}

\begin{corollary}
  There cannot be right adjoints $f^{-1} \dashv \forall_f$ for all
  morphisms $f$ of $\cat{H}$, that satisfy the Beck-Chevalley condition.
\end{corollary}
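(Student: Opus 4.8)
The plan is to argue by contradiction: a universal quantifier would force each $\ClSub(X)$ to be a Heyting algebra, hence distributive, whereas we have just seen that the lattices of closed subobjects are in general only orthomodular and genuinely fail distributivity. Concretely, I would suppose that for every morphism $f$ of $\cat{H}$ the inverse image map $f^{-1}$ admits a right adjoint $\forall_f$ (in fact the Beck--Chevalley hypothesis will not even be needed). The first observation is that this is exactly the data required to equip each fiber with a relative pseudocomplement, which is what distributivity is secretly about.

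The key step is to construct, for a fixed $a \in \ClSub(X)$, a right adjoint to the monotone map $a \wedge (-) : \ClSub(X) \to \ClSub(X)$. I would represent $a$ by a $\dag$-mono $a : A \rightarrowtail X$; this is itself a morphism of $\cat{H}$, so by hypothesis $a^{-1} : \ClSub(X) \to \ClSub(A)$ has a right adjoint $\forall_a$. I would then set $a \Rightarrow b := \forall_a(a^{-1} b)$ and verify
\[
  c \leq (a \Rightarrow b) \iff a \wedge c \leq b
\]
for all $b, c \in \ClSub(X)$. This reduces to two facts already at hand: that $a^{-1}(c)$ represents the meet $a \wedge c$ as a \emph{closed} subobject of $A$ (Lemma~\ref{lem:dsubstableunderpullbacks} guarantees closedness), and that postcomposition with the mono $a$ reflects the order, so that $a^{-1}c \leq a^{-1}b$ in $\ClSub(A)$ is equivalent to $a \wedge c \leq a \wedge b$, i.e.\ to $a \wedge c \leq b$, in $\ClSub(X)$. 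Chaining this with the adjunction $a^{-1} \dashv \forall_a$ yields the displayed equivalence, so $a \wedge (-) \dashv (a \Rightarrow -)$ and $\ClSub(X)$ is a Heyting algebra, in particular a distributive lattice.

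Finally I would close the argument by exhibiting a non-distributive fiber. Taking $\cat{H} = \Hilb$ (or $\Cat{fdHilb}$) and $X = \field{C}^2$, the closed subspaces are $0$, $\field{C}^2$, and the lines through the origin; for three distinct lines $a, b, c$ one has $a \wedge (b \vee c) = a \wedge \field{C}^2 = a$ while $(a \wedge b) \vee (a \wedge c) = 0$. This contradicts the distributivity just derived, so no such family of right adjoints can exist.

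The part I expect to be the main obstacle is the construction step, and specifically its bookkeeping: ensuring the relative pseudocomplement lands in $\ClSub$ rather than merely in $\Sub$, and checking that the order is transported faithfully between $\ClSub(A)$ and $\ClSub(X)$ along the $\dag$-mono $a$. Once the implication is recognised as a genuine right adjoint to meeting, distributivity is automatic and the orthomodular-but-not-distributive example finishes the proof; it is worth noting in passing that Beck--Chevalley plays no role here, so the impossibility holds already for bare adjoints.
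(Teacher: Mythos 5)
Your argument is correct and follows essentially the same route as the paper: the paper's proof derives a Heyting implication on each $\ClSub(X)$ from the hypothesised right adjoints (citing \cite[3.4.16]{awodeybauer:introduction}) and then observes that $\ClSub(X)$ in $\Hilb$ is in general not a Heyting algebra. You simply make explicit the construction $a \Rightarrow b = \forall_a(a^{-1}b)$ and the three-lines-in-$\field{C}^2$ counterexample, and your remark that Beck--Chevalley is not actually needed is consistent with the paper's proof, which never invokes it.
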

\begin{proof}
  If there would be, then $\wedge$ would have a right adjoint in every
  $\ClSub(X)$~\cite[3.4.16]{awodeybauer:introduction}. That is, there
  would be an implication. But the prime example $\Hilb$ shows that
  $\ClSub(X)$ is in general not a Heyting algebra.
  \qed
\end{proof}

\begin{lemma}
  The functor $\perp : \ClSub(X)\op \to \ClSub(X)$ is an equivalence of
  categories. In particular, it is both left and right adjoint to
  its opposite $\perp\op : \ClSub(X) \to \ClSub(X)\op$.
\end{lemma}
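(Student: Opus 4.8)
The plan is to lean entirely on the two facts already proved in Proposition~\ref{lem:perpfromdagger}: that $\perp$ is order-reversing and that it is involutive, $m^{\perp\perp}=m$. Since $\ClSub(X)$ is a partial order, it is a thin category in which the only isomorphisms are identities; consequently an equivalence of categories out of $\ClSub(X)\op$ is the same thing as an order isomorphism onto $\ClSub(X)$, namely a map that is bijective on objects and both preserves and reflects the order.

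First I would check essential surjectivity, which here means surjectivity on objects: every $n \in \ClSub(X)$ is hit, since $n = n^{\perp\perp} = \perp(n^\perp)$ by involutivity. Next comes fullness---faithfulness being automatic in a thin category, where each hom-set has at most one element. A morphism $m \to n$ in $\ClSub(X)\op$ is by definition an inequality $n \leq m$ in $\ClSub(X)$, which $\perp$ sends to $m^\perp \leq n^\perp$; conversely, given $m^\perp \leq n^\perp$ in $\ClSub(X)$, applying $\perp$ and using involutivity returns $n \leq m$, i.e.\ a morphism $m \to n$ in $\ClSub(X)\op$. Hence $\perp$ reflects the order and is fully faithful, so it is an equivalence. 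In fact it is an isomorphism of categories: both composites $\perp \after \perp\op$ and $\perp\op \after \perp$ act on objects as $m \mapsto m^{\perp\perp}=m$, and a functor that is the identity on objects of a poset must be the identity functor.

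It remains to read off the two adjunctions. Because $\perp$ is an equivalence with strict quasi-inverse $\perp\op$, it is automatically both left and right adjoint to $\perp\op$, and in the poset setting the triangle identities require no verification, again because every hom-set of $\ClSub(X)$ is a subsingleton. Spelled out, $\perp \dashv \perp\op$ is exactly the Galois-connection biconditional $m^\perp \leq n \iff n^\perp \leq m$ for $m,n \in \ClSub(X)$, which is immediate from order-reversal together with $m^{\perp\perp}=m$, and the symmetric biconditional $n \leq m^\perp \iff m \leq n^\perp$ delivers $\perp\op \dashv \perp$. The only thing demanding care is the bookkeeping of the opposite categories---tracking which direction of $\leq$ a morphism of $\ClSub(X)\op$ encodes, and confirming that involutivity on objects really does promote the equivalence to honest adjunctions on both sides---but no genuine obstacle arises beyond invoking the order-reversal and involutivity already in hand.
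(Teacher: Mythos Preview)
Your proof is correct and follows the same idea as the paper's: both rest entirely on the order-reversal and involutivity of $\perp$ established in Proposition~\ref{lem:perpfromdagger}, and both identify the adjunction with the biconditional $m^\perp \leq n \iff n^\perp \leq m$. The paper's proof is a single sentence stating exactly that biconditional, whereas you additionally unpack why an involutive order-reversing map on a poset is a categorical equivalence (essential surjectivity, fullness, automatic faithfulness in a thin category); this extra detail is sound but not required, since in a poset the Galois-connection biconditional already encodes both the equivalence and the two adjunctions simultaneously.
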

\begin{proof}
  This means precisely that $m^\perp \leq n$ iff $n^\perp \leq m$,
  which holds since $\perp$ is involutive.
  \qed
\end{proof}

The following theorem, inspired by~\cite{palmquist:adjoints}, provides
a connection between adjoint morphisms in a pre-Hilbert category and
adjoint functors between lattices of $\dag$-subobjects. 
It explicates the relationship between $\exists_f$ and $\exists_{f^\dag}$.

\begin{theorem}
\label{thm:existsandadjoints}
  For a morphism $f:X \to Y$, define 
  \[
    f^\perp = \perp_Y \after \exists_f\op : \ClSub(X)\op \to \ClSub(Y).
  \]
  Then
  \[
    (f^\perp)\op \dashv (f^\dag)^\perp.
  \]  
\end{theorem}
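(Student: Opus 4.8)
The plan is to unfold the claimed adjunction into an elementary biconditional between the two posets, and then reduce it by formal manipulations to a single identity that abstracts the Hilbert-space fact $f^{-1}(N^\perp) = (f^\dag N)^\perp$.

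First I would spell out what $(f^\perp)\op \dashv (f^\dag)^\perp$ means concretely. Chasing the $\mathrm{op}$'s, the left adjoint sends $m \in \ClSub(X)$ to $(\exists_f m)^\perp \in \ClSub(Y)$ and the right adjoint sends $n \in \ClSub(Y)$ to $(\exists_{f^\dag} n)^\perp \in \ClSub(X)$; since the codomain of the left adjoint carries the opposite order, the hom-set bijection amounts to the single biconditional
\[
  n \leq (\exists_f m)^\perp
  \quad\Longleftrightarrow\quad
  m \leq (\exists_{f^\dag} n)^\perp
\]
for all $m \in \ClSub(X)$ and $n \in \ClSub(Y)$. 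Both maps are already known to be functorial (monotonicity of $\exists$ together with the functoriality of $\perp$ established earlier), so establishing this biconditional suffices.

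Next I would rewrite the left-hand side using two tools from earlier. By the self-duality of $\perp$ (namely $a \leq b^\perp \Leftrightarrow b \leq a^\perp$, immediate from involutivity and order-reversal), $n \leq (\exists_f m)^\perp$ is equivalent to $\exists_f m \leq n^\perp$; and by the adjunction $\exists_f \dashv f^{-1}$ of Theorem~\ref{thm:exists}, this is in turn equivalent to $m \leq f^{-1}(n^\perp)$. Thus the entire theorem collapses to the key identity
\[
  f^{-1}(n^\perp) = (\exists_{f^\dag} n)^\perp
  \qquad \text{in } \ClSub(X),
\]
for every $n \in \ClSub(Y)$ represented by a $\dag$-mono $n$. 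To prove it I would compute both sides as the same $\dag$-kernel. For the right-hand side I would first record that $(\mathrm{Im}(g))^\perp = \ker(g^\dag)$ for any morphism $g$: factoring $g = m \after e$ with $e$ epi and $m = \mathrm{Im}(g)$ a $\dag$-mono gives $(\mathrm{Im}(g))^\perp = \ker(m^\dag)$, and since $e^\dag$ is mono one has $\ker(e^\dag m^\dag) = \ker(m^\dag)$, i.e. $\ker(g^\dag) = \ker(m^\dag)$. Applying this to $g = f^\dag \after n$ yields $(\exists_{f^\dag} n)^\perp = (\mathrm{Im}(f^\dag n))^\perp = \ker(n^\dag \after f)$. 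For the left-hand side, $n^\perp = \ker(n^\dag)$ and the inverse image $f^{-1}$ is computed by pullback (landing in $\ClSub(X)$ by Lemma~\ref{lem:dsubstableunderpullbacks}); the pullback of a kernel $\ker(h)$ along $f$ is $\ker(h \after f)$ (a subobject $L \hookrightarrow X$ maps into $\ker(h)$ after $f$ exactly when $h \after f$ kills it), so $f^{-1}(n^\perp) = f^{-1}(\ker(n^\dag)) = \ker(n^\dag \after f)$. Both sides equal $\ker(n^\dag \after f)$, establishing the identity.

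The formal adjunction juggling in the first two paragraphs is routine; the real content, and the step I expect to need the most care, is the key identity, and inside it the pullback-of-a-kernel computation $f^{-1}(\ker h) = \ker(h \after f)$ together with keeping images, kernels and orthocomplements correctly aligned. This identity is exactly the categorical shadow of $f^{-1}(N^\perp) = (f^\dag N)^\perp$ in $\Hilb$, and once it is in place the theorem follows immediately.
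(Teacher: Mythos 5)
Your proof is correct, but it is organised quite differently from the paper's. The paper unfolds the adjunction to the same biconditional $n \leq (\mathrm{Im}(fm))^\perp \Leftrightarrow m \leq (\mathrm{Im}(f^\dag n))^\perp$ and then proves it by one self-contained diagram chase: it factorises $fm$ and $f^\dag n$, and shows each side equivalent to $n^\dag \after f \after m = 0$ using only epi/mono cancellation; the resulting diagram~\eqref{eq:adjointexistsfunctors} is then reused verbatim to prove the converse, Theorem~\ref{thm:existsandadjointsconverse}. You instead compose two adjunctions already on record --- the Galois self-adjointness of $\perp$ (involutivity plus order reversal, as in the equivalence lemma) and $\exists_f \dashv f^{-1}$ from Theorem~\ref{thm:exists} --- thereby reducing the theorem to the single identity $f^{-1}(N^\perp) = (\exists_{f^\dag}N)^\perp$, which you verify by computing both sides as $\ker(n^\dag \after f)$ via $(\mathrm{Im}\,g)^\perp = \ker(g^\dag)$ (correct: post-composition with the mono $e^\dag$ does not change a kernel) and $f^{-1}(\ker h) = \ker(h \after f)$ (correct: both have the same universal property). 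At bottom both arguments rest on the symmetry of the condition $n^\dag f m = 0$ under $\dag$, but your decomposition is genuinely different and buys something the paper leaves implicit: it identifies the right adjoint $(f^\dag)^\perp$ as the composite $f^{-1} \after \perp_Y$, i.e.\ ``inverse image of the orthocomplement'', which is exactly the categorical form of $f^{-1}(N^\perp) = (f^\dag N)^\perp$ in $\Hilb$ that motivates the section. What it does not produce is the explicit kernel/cokernel diagram that the paper recycles for the converse theorem, so the paper's more pedestrian chase earns its keep later.
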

\begin{proof}
  In general, for $g:Y \to X$, the adjunction $(f^\perp)\op \dashv
  g^\perp$ means that for $M \in \ClSub(X)$ and $N \in \ClSub(Y)$,
  \[ 
    \ClSub(Y)\op(\perp_Y\op \after \exists_f(M),N) 
    \cong
    \ClSub(X)(M,\perp_X \after \exists_{f^\dag}\op(N)).
  \]
  That is, $n \leq \ker(\Im(fm)^\dag)$ iff $m \leq
  \ker(\Im(gn)^\dag)$. That means that in 
  \begin{equation}
  \label{eq:adjointexistsfunctors}\raise12ex\hbox{
  \xymatrix@R-3.5ex@C+3ex{
    L \ar^-{0}[rr] \ar@{ |>->}^-{\ker(l^\dag)}[ddr] 
    && J \ar@{ >->}[ddr]^-{j^\dag} \\ \\
    M \ar@{-->}^-{q}[uu] \ar@{ |>->}^-{m}[r] 
    & X \ar@{-|>}^-{l^\dag}[uur] \ar^-{g^\dag}[r]
    & Y \ar@{-|>}^-{n^\dag}[r] & N \\
    M \ar@{ |>->}_-{m}[r] \ar@{->>}_-{i}[ddr] 
    & X \ar_-{f}[r] & Y \ar@{-|>}_-{n^\dag}[r]
    \ar@{-|>}_-{\coker(k)}[ddr] & N \\ \\
    & I \ar@{ |>->}_-{k}[uur] \ar_-{0}[rr]
    && K \ar@{-->}_-{p}[uu]
  }}\end{equation}
  we must show that there is a $p$ making the lower diagram commute iff
  there is a $q$ making the upper one commute, for the special case
  $g=f^\dag$. So, let such a $q$ be given. Then 
  \[
      n^\dag \after k \after i
    = n^\dag \after f \after m
    = j \after l^\dag \after m
    = j \after l^\dag \after \ker(l^\dag) \after q
    = j \after 0 \after q
    = 0
    = 0 \after i,
  \]
  and since $i$ is epi, $n^\dag k = 0$. Hence $n^\dag$ factors through
  $\coker(k)$ via some $p$. Conversely, given $p$, we have
  \[
      j^\dag \after l^\dag \after m
    = n^\dag \after f \after m
    = n^\dag \after k \after i
    = p \after \coker(k) \after k \after i
    = p \after 0 \after i
    = 0
    = j^\dag \after 0,
  \]
  so since $j^\dag$ is mono, $l^\dag m = 0$. Hence $m$ factors through
  $\ker(l^\dag)$ via some $q$.
  \qed
\end{proof}

In a diagram, the adjunction of the previous theorem is the following.
\[\xymatrix{
  \ClSub(X) \ar@{}|{\rotatebox{45}{$\vdash$}}[dr]
  \ar^-{\exists_f}[r] & \ClSub(Y) \ar^-{\perp_Y\op}[d] \\
  \ClSub(X)\op \ar^-{\perp_X}[u] & \ClSub(Y)\op
  \ar^-{\exists_{f^\dag}\op}[l] 
}\]
A converse to this theorem needs some preparation, and the assumption
that the monoidal unit is a simple generator.

\begin{lemma}
\label{lem:functionals}
  Let $C$ be a simple object in a pre-Hilbert category.
  If $f,g:X \to C$ satisfy $\ker(f) \leq \ker(g)$,
  then $g=sf$ for some $s:C \to C$. Unless $f=0$, this $s$ is unique.
\end{lemma}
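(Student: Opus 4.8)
The plan is to split on whether $f=0$. If $f=0$, then $\ker(f)=\id[X]$, so the hypothesis $\ker(f)\leq\ker(g)$ forces $\ker(g)=\id[X]$ and hence $g=0$. In this case $g=sf$ holds for \emph{every} $s:C\to C$ (take $s=0$, say), which is exactly why the statement only claims uniqueness when $f\neq0$. The substance of the lemma is therefore the case $f\neq0$, and the whole point is to convert a nonzero map into a simple object into an epimorphism.

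So assume $f\neq0$, and exploit simplicity of $C$ through the factorisation of Lemma~\ref{lem:factorisation}. I would write $f=m\after e$ with $e=\coker(\ker(f))$ a $\dag$-epi and $m:I\to C$ a mono. Since $m$ represents a subobject of $C$ and $\Sub(C)=\{0,C\}$, there are only two possibilities: either $I\cong0$, which would force $f=m\after e=0$ against our assumption, or $m$ represents the top subobject $\id[C]$, in which case $m$ is an isomorphism. Hence $m$ is iso, and consequently $f=m\after e$ is epi, being the composite of the ($\dag$-)epi $e$ with an isomorphism.

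Next I would push $g$ through the same $e$. From $\ker(f)\leq\ker(g)$, writing $\ker(f)=\ker(g)\after t$, I obtain $g\after\ker(f)=0$; since $e=\coker(\ker(f))$, the universal property of the cokernel factors $g$ as $g=g'\after e$ for some $g':I\to C$. Setting $s=g'\after m^{-1}:C\to C$ then gives $s\after f=g'\after m^{-1}\after m\after e=g'\after e=g$, establishing existence. Uniqueness is then immediate: if $s_1\after f=s_2\after f$, epi-ness of $f$ (shown in the previous step) yields $s_1=s_2$; note that no Abelian-group enrichment is needed here, only right-cancellability of $f$.

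The main obstacle is the middle step, namely deducing from simplicity of $C$ that the mono part $m$ of the factorisation is genuinely an isomorphism — equivalently, that a nonzero morphism into a simple object is epi. Everything else is formal manipulation with the factorisation system. A small but crucial choice is to use the mono-$\dag$-epi factorisation rather than the epi-$\dag$-mono one of Lemma~\ref{lem:factorisation}, precisely because it makes $e=\coker(\ker(f))$ available explicitly, so that the factoring $g=g'\after e$ drops out of the cokernel's universal property with no further work.
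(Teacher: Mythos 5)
Your proof is correct and follows essentially the same route as the paper's: both hinge on factorising $f$ via the factorisation system of Lemma~\ref{lem:factorisation}, using simplicity of $C$ to force the mono part to be an isomorphism (so that a nonzero $f$ is epi), and then factoring $g$ through $\coker(\ker(f))$ by the cokernel's universal property, with uniqueness from epi-ness of $f$. Your version is in fact slightly more direct, since the paper first detours through $\exists_f X$ and a second $\dag$-epi--mono factorisation before arriving at the same cokernel argument.
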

\begin{proof}
  Consider $\exists_fX \in \ClSub(C)$. Either $\exists_fX=0$, or
  $\exists_fX$ is an iso and hence a $\dag$-iso since it is a $\dag$-mono. 

  If $\exists_fX=0$, then $f=0$. So $\ker(f)$ is a
  $\dag$-iso, and since $\ker(f)\leq \ker(g)$, also $\ker(g)$ is
  $\dag$-iso, whence $g=0$. Thus $g = 0 f$.

  If $\exists_fX$ is a $\dag$-iso, in particular it is epi, and so is $f$.
  It can be factorised as a $\dag$-epi $f'$ followed by a mono $s_f$.
  \[\xymatrix@R-4ex@C+2ex{
    & I \ar@{ >->}^-{s_f}[dr] \\
    X \ar@{-|>}^-{f'}[ur] \ar@{->>}_-{f}[rr] & & C
  }\]
  Now either $s_f=0$ or $s_f$ is iso. If $s_f=0$ then $\exists_fX=0$ and
  hence $f=0$, so that we are done by $g = 0 f$. Hence we may assume
  $s_f$ iso.

  Since $\ker(f') \leq \ker(f) \leq \ker(g)$
  we are thus left with the following situation.
  \[\xymatrix@C+3ex@R-3ex{
    L \ar@{ |>->}^-{\ker(g)}[dr] & & C \\
    & X \ar@{-|>}^-{f'}[ur] \ar_-{g}[dr] \\
    K \ar@{ |>->}^-{p}[uu] \ar@{ |>->}_-{\ker(f')}[ur] & & C
  }\]
  Now $f' = \coker(\ker(f'))$, and 
  \[
       g \after \ker(f') 
     = g \after \ker(g) \after p 
     = 0 \after p 
     = 0.
  \]
  Hence there is a unique $s'$ such that $g = s' \after f'$.
  Finally, putting $s = s' s_f^{-1}$ satisfies
  $g = s' f' = s' s_f^{-1} f = s f$.
  \qed
\end{proof}

In a monoidal category, morphisms $s:C \to C$ play
the role of scalars, and multiplication with them is natural. 
As mentioned before, if $C$ is a simple generator, then the scalars
comprise an involutive
field~\cite[Theorem~4.6]{heunen:hilbcatsembedding}. The following
lemma summarises some well-known (and easily proved) results. 

\begin{lemma}
\label{lem:scalarmultiplication}
  Let $\cat{H}$ be a monoidal category.
  Then $\cat{H}(C,C)$ is an involutive semiring that acts on $\cat{H}$
  by \emph{scalar multiplication} as follows: for $s:C \to C$ and $f:X
  \to Y$, $s \bullet f$ is defined by
  \[\xymatrix@R-2ex{
    X \ar^-{s \bullet f}[r] \ar_-{\cong}[d] & Y \\
    C \tensor X \ar_-{s \tensor f}[r] & C \tensor Y \ar_-{\cong}[u]
  }\]
  Moreover, scalar multiplication is natural, that is, 
  $(s \bullet g) \after f = g \after (s \bullet f)$.
  Finally, $s \bullet f = s \after f$ for $s:C \to C$ and $f:X \to C$.
  \qed
\end{lemma}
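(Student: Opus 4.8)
The plan is to dispatch the four assertions one by one, since each is a standard fact about the endomorphisms of the monoidal unit $C$ and the only step with real content is commutativity. Throughout I would abbreviate the left unitor by $\lambda_X : C \tensor X \to X$, so that the defining square reads $s \bullet f = \lambda_Y \after (s \tensor f) \after \lambda_X^{-1}$. Since this is a morphism pre- and post-composed with isomorphisms, $s \bullet f : X \to Y$ is well defined with no further argument, which already settles that the displayed diagram genuinely specifies an action.

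For the involutive-semiring clause I would take multiplication to be composition and addition to be the one furnished by the enrichment over Abelian groups of \cite[Theorem~4.6]{heunen:hilbcatsembedding}; bilinearity of composition in that enrichment gives the two distributive laws, so $\cat{H}(C,C)$ is at least a semiring. Commutativity is the one point deserving care: $\cat{H}(C,C)$ carries two monoid structures sharing the unit $\id[C]$, namely composition and the product $s,t \mapsto \lambda_C \after (s \tensor t) \after \lambda_C^{-1}$, and bifunctoriality of $\tensor$ supplies the interchange law relating them, so the Eckmann--Hilton argument (equivalently, the Kelly--Laplaza observation that the unit's endomorphism monoid is commutative) forces both products to coincide and to be commutative. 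The involution is then $s \mapsto s^\dag$, coming from the $\dag$-functor; it is a priori composition-reversing, but commutativity promotes it to a genuine semiring homomorphism, and $s^{\dag\dag} = s$ makes it involutive.

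Naturality I would prove by unfolding both sides of $(s \bullet g) \after f = g \after (s \bullet f)$ with the abbreviation above and transporting $f$ and $g$ across the unitors via naturality of $\lambda$: on the left $\lambda_Y^{-1} \after f = (\id[C] \tensor f) \after \lambda_X^{-1}$, and on the right $g \after \lambda_Y = \lambda_Z \after (\id[C] \tensor g)$. Bifunctoriality of $\tensor$ then collapses both $(s \tensor g) \after (\id[C] \tensor f)$ and $(\id[C] \tensor g) \after (s \tensor f)$ to $s \tensor (g \after f)$, so each side reduces to $\lambda_Z \after (s \tensor (g \after f)) \after \lambda_X^{-1}$ and they agree. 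For the final identity, with $f : X \to C$, I would factor $s \tensor f = (s \tensor \id[C]) \after (\id[C] \tensor f)$, then push $s$ through $\lambda_C$ using the coherence $\lambda_C = \rho_C$ together with naturality of the right unitor, and push $f$ through $\lambda_X$ using naturality of the left unitor; the two copies of $\lambda_C$ cancel and leave $s \bullet f = s \after f$.

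The only genuinely nontrivial step is the commutativity of $\cat{H}(C,C)$, which is precisely where coherence enters; everything else is naturality of the unitors together with the interchange law. The subtlety I would flag explicitly is that the bare hypothesis of a monoidal category does not by itself supply either the addition or the involution, so the ``involutive semiring'' clause should be read in the ambient pre-Hilbert (hence $\dag$-monoidal and Abelian-group-enriched) setting in which the lemma is actually applied.
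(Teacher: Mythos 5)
Your proof is correct, and it is exactly the standard argument the paper has in mind: the lemma is stated with its \qed attached and no proof given, on the grounds that these are ``well-known (and easily proved) results,'' so there is nothing in the paper to diverge from. All four steps check out --- the Eckmann--Hilton/Kelly--Laplaza argument for commutativity of $\cat{H}(C,C)$, naturality via the unitors and the interchange law, and the use of $\lambda_C=\rho_C$ for the final identity --- and your closing caveat is apt: the addition and the involution on $\cat{H}(C,C)$ are not supplied by bare monoidal structure but by the $\dag$-biproducts and $\dag$-functor of the ambient pre-Hilbert category, so the hypothesis ``monoidal category'' in the statement is slightly looser than what the ``involutive semiring'' clause actually requires.
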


Now we can state and prove a converse to
Theorem~\ref{thm:existsandadjoints}. 

\begin{theorem}
\label{thm:existsandadjointsconverse}
  In a pre-Hilbert category whose monoidal unit is a simple generator,
  if $(f^\perp)\op \dashv g^\perp$, then $g = s \bullet f^\dag$ for a
  scalar $s$. Unless $f=0$, this $s$ is unique.
\end{theorem}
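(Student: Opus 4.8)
The plan is to unwind the hypothesised adjunction into an elementary statement about points, reduce the pointwise comparison of $g$ and $f^\dag$ to Lemma~\ref{lem:functionals}, and then glue the resulting local scalars into a single global scalar using the Abelian-group and field structure that a simple generator provides.

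First I would make the adjunction concrete. Exactly as in the proof of Theorem~\ref{thm:existsandadjoints}, the hypothesis $(f^\perp)\op \dashv g^\perp$ says that $n \leq \ker(\Im(fm)^\dag)$ iff $m \leq \ker(\Im(gn)^\dag)$ for all $m \in \ClSub(X)$ and $n \in \ClSub(Y)$. I would test this on the $\dag$-subobjects generated by points, taking $m = \Im(x)$ and $n = \Im(y)$ for $x : C \to X$ and $y : C \to Y$. Using $\exists_f(\Im(x)) = \Im(fx)$ and $\exists_g(\Im(y)) = \Im(gy)$ (precomposition with an epi does not change the image), together with the fact that for a point $z$ one has $\Im(z) \leq \ker(w^\dag)$ iff $w^\dag \after z = 0$, the biconditional collapses to
\[ x^\dag \after f^\dag \after y = 0 \quad\Longleftrightarrow\quad x^\dag \after g \after y = 0 \qquad\text{for all } x,y, \]
where I have used that a scalar vanishes iff its $\dag$ does.

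Now fix $y$. The displayed equivalence says that the functionals $(f^\dag y)^\dag$ and $(gy)^\dag$ from $X$ to $C$ kill the same points $x$, so since $C$ is a generator their kernels coincide; in particular $\ker((f^\dag y)^\dag) \leq \ker((gy)^\dag)$. As $C$ is simple, Lemma~\ref{lem:functionals} then yields a scalar with $(gy)^\dag = s \after (f^\dag y)^\dag$, and taking daggers gives $gy = t_y \bullet f^\dag y$ for a scalar $t_y$, unique whenever $f^\dag y \neq 0$. The degenerate case is handled directly: if $f^\dag y = 0$ the equivalence forces $x^\dag \after g \after y = 0$ for every $x$, and taking $x = gy$ gives $(gy)^\dag \after (gy) = 0$, whence $gy = 0$ by nondegeneracy (if $b^\dag b = 0$ then $\Im(b) \leq \ker(b^\dag) = \Im(b)^\perp$, so $\Im(b) = \Im(b) \wedge \Im(b)^\perp = 0$ by Lemma~\ref{lem:orthocomplement}).

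The remaining, and genuinely hard, step is to show that the local scalars $t_y$ can be chosen independent of $y$; this is where mere orthogonality data must be upgraded to a linear relation. Here I would use that a simple generator makes $\cat{H}$ enriched in Abelian groups with the scalars forming a field. Pick $y_0$ with $f^\dag y_0 \neq 0$ --- possible unless $f = 0$, since $C$ generates --- and put $s = t_{y_0}$. For an arbitrary $y$ with $f^\dag y \neq 0$ I would compare $t_y$ with $s$ by additivity: if $f^\dag y = r \bullet f^\dag y_0$, then $f^\dag(y - r \bullet y_0) = 0$, hence $g(y - r \bullet y_0) = 0$ by the previous paragraph, which rearranges to $t_y \bullet f^\dag y = t_{y_0} \bullet f^\dag y$ and forces $t_y = s$; if instead $f^\dag y$ and $f^\dag y_0$ are linearly independent, applying the same reasoning to $y + y_0$ gives $(t_y - t_{y+y_0}) \bullet f^\dag y + (s - t_{y+y_0}) \bullet f^\dag y_0 = 0$, and independence together with invertibility of nonzero scalars forces both coefficients to vanish, again yielding $t_y = s$. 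Since $gy = 0 = s \bullet f^\dag y$ when $f^\dag y = 0$, we obtain $gy = s \bullet f^\dag y$ for every point $y$, so $g = s \bullet f^\dag$ because $C$ is a generator and scalar multiplication is natural (Lemma~\ref{lem:scalarmultiplication}). Uniqueness of $s$ when $f \neq 0$ follows from nondegeneracy as before. I expect this gluing step to be the main obstacle.
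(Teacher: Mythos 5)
Your proof is correct, and its first half---unwinding the adjunction to the kernel inclusion $\ker(y^\dag f)\leq\ker(y^\dag g^\dag)$ and invoking Lemma~\ref{lem:functionals} to produce a local scalar $t_y$ for each point $y$ of $Y$---is essentially the paper's argument, except that you test the adjunction on point-images $\Im(x)$, $\Im(y)$ and recover the kernel inclusion via the generator property, whereas the paper tests it directly on the closed subobject $m=\ker(n^\dag f)$ and needs no generator at that stage. The genuine divergence is in the gluing step, which you rightly identify as the crux. The paper fixes a nonzero point $y$ and, for an arbitrary point $n$, decomposes $n = yy^\dag n + y^\perp(y^\perp)^\dag n$ using the fact that $[y,y^\perp]$ is a $\dag$-iso (Lemma~\ref{lem:orthocomplement}), then compares coefficients in the resulting sum; you instead work in the vector space $\cat{H}(C,X)$ over the scalar field and split into the cases ``$f^\dag y$ is a multiple of $f^\dag y_0$'' versus ``linearly independent'', using $y+y_0$ as an auxiliary test point in the second case. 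Both routes rest on the same Ab-enrichment and field structure of the scalars supplied by the simple-generator hypothesis; yours trades the orthogonal decomposition for an elementary linear-algebra dichotomy, which makes the coefficient comparison arguably more transparent (the paper's final cancellation $s_n=s_{n'}$ from a two-term sum implicitly relies on a comparable independence observation). The points to be careful about in your version are minor and you have handled them: the dichotomy requires both $f^\dag y$ and $f^\dag y_0$ to be nonzero, and in the independent case $f^\dag(y+y_0)\neq 0$ automatically, so the scalar $t_{y+y_0}$ exists and is unique.
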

\begin{proof}
  The adjunction of the hypothesis
  means that there is a $q$ making the upper diagram
  in~\eqref{eq:adjointexistsfunctors} commute iff there is a $p$
  making the lower one commute.
  So, if $n^\dag f m=0$, then $n^\dag k i=0$, and because $i$ is epi
  hence $n^\dag k=0$. So $p$ exists, whence $q$ exists, so that
  $n^\dag g^\dag m = j^\dag 0 q = 0$. Taking $m=\ker(n^\dag f)$ thus
  gives that $\ker(n^\dag f) \leq \ker(n^\dag g^\dag)$ for all $n$. 
  Applying Lemma~\ref{lem:functionals} yields that for all
  $n:C \to Y$, there exists $s_n :C \to C$ such that $n^\dag g^\dag =
  s_n n^\dag f$. Using Lemma~\ref{lem:scalarmultiplication} and
  dualising, this becomes: for all $n:C \to Y$, there is $s_n : 
  C \to C$ with $g n = (s_n^\dag \bullet f^\dag) n$. 
  We will show that all $s_n$ are in fact equal to each other (or zero).
  If all $y:C \to Y$ would have $y=0$, then $Y \cong 0$, in which case
  $g=0 \bullet f^\dag$. Otherwise, pick an $y:C \to Y$ with $y \neq
  0$. There is an $s:C \to C$ with $gy = (s^\dag \bullet f^\dag)y$. 
  Put $n' = y y^\dag n : C \to Y$ and $n''=\ker(y^\dag) \after
  \ker(y^\dag)^\dag \after n:C \to Y$. Then
  \begin{align*}
        n' + n''
    & = [\id,\id] \after ((y \after y^\dag \after n) \oplus
        (\ker(y^\dag) \after \ker(y^\dag)^\dag \after n)) \after 
        \langle \id, \id \rangle \\
    & = [ y, y^\perp] \after [y, y^\perp]^\dag \after n \\
    & = n.
  \end{align*}
  Moreover, 
  \[ 
    (s_{n'}^\dag \bullet f^\dag)n' = gn' = gyy^\dag n = (s^\dag \bullet
    f^\dag) yy^\dag n = (s^\dag \bullet f^\dag) n',
  \]
  so $s_{n'} = s$. Finally
  \begin{align*}
         (s_{n'}^\dag \bullet f^\dag) n' + (s_{n''}^\dag \bullet
         f^\dag) n''
     & = gn' + gn'' \\
     & = gn \\
     & = (s_n^\dag \bullet f^\dag) n \\
     & = (s_n^\dag \bullet f^\dag) n' + (s_n^\dag \bullet f^\dag) n''.
  \end{align*}
  Hence $s_n=s_{n'}=s$ for all $n:C \to Y$, and we have $g n = (s^\dag
  \bullet f^\dag) n$. But since $C$ is a generator, $g = s^\dag
  \bullet f^\dag$. Reviewing our choice of $s$ in the above proof, we
  see that it is unique unless $f=0$.
  \qed
\end{proof}

As a consequence, we find that, modulo scalars, the passage from
morphisms $f$ to functors $\perp \after \exists_f\op$ is one-to-one.

\appendix
\section{Fibred account}

We can summarise our results in terms of fibred category
theory~\cite{jacobs:fibrations}. There are fibrations $\Sub(\cat{H})
\to \cat{H}$ and $\ClSub(\cat{H}) \to \cat{H}$. The latter is in fact a
fibration of meet-semilattices by
Lemma~\ref{lem:dsubstableunderpullbacks}. The reflection of
Lemma~\ref{lem:subindsub} is a fibred reflection. Our functor
$\exists$ of Theorem~\ref{thm:exists} is a fibred coproduct, and hence
truely provides a existential quantifier. 

The assignments
$\cat{H} \to \Sub(\cat{H})$, $X \mapsto \id[X]$ assemble into a fibred
terminal object $1:\cat{H} \to \Sub(\cat{H})$, also for $\cat{H} \to
\ClSub(\cat{H})$. 
The fibrations $\Sub(\cat{H}) \to \cat{H}$ and $\ClSub(\cat{H}) \to
\cat{H}$ admit comprehension. This means that $1:\cat{H} \to
\Sub(\cat{H})$ has a right adjoint, usually denoted by
$\{-\}:\Sub(\cat{H}) \to \cat{H}$. Indeed, if we take $\{m : M
\rightarrowtail X\} = M$, then $\Sub(\cat{H})(\id[X], m) \cong
\cat{H}(X, \{m\})$.

In fact, the fibration $\ClSub(\cat{H}) \to \cat{H}$ is a bifibration
by Theorem~\ref{thm:exists} and \cite[9.1.2]{jacobs:fibrations} --
notice that the Beck-Chevalley condition is not needed for this. 
Thus, $\ClSub(\cat{H})\op \to \cat{H}\op$, $\xymatrix@1{(m:M\;\ar@{
|>->}[r] & X)} \mapsto X$ is also a fibration. The following
proposition shows that orthogonality can be extended to a functor
between fibrations, but it is not a fibred functor, basically because
it does not commute with pullback. 

\begin{proposition}
  $(-)^\perp$ extends to a functor $\ClSub(\cat{H})\op \to
  \ClSub(\cat{H})$ satisfying
  \begin{equation}\label{eq:perpfibfunctor}\raise5ex\hbox{\xymatrix{
    \ClSub(\cat{H})\op \ar[d] \ar^-{(-)^\perp}[r] & \ClSub(\cat{H}) \ar[d] \\
    \cat{H}\op \ar_-{(-)^\dag}[r] & \cat{H}
  }}\end{equation}
  However, it is not a fibred functor.  
\end{proposition}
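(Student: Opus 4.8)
The plan is to define $(-)^\perp$ on the whole total category and then read off both assertions. Recall that an object of $\ClSub(\cat{H})$ is a $\dag$-mono, that is, a pair $(X,m)$ with $m \in \ClSub(X)$, and that a morphism $(X,m) \to (Y,n)$ is a map $f : X \to Y$ of $\cat{H}$ with $\exists_f m \leq n$ (equivalently $m \leq f^{-1}n$); since the fibres are posets there is at most one such morphism over a given $f$. On objects I put $(X,m)^\perp = (X,m^\perp)$, which is well defined by Proposition~\ref{lem:perpfromdagger} and lives over $X = X^\dag$. A morphism of $\ClSub(\cat{H})\op$ from $(X,m)$ to $(Y,n)$ is a morphism $(Y,n)\to(X,m)$ of $\ClSub(\cat{H})$, hence a map $g : Y \to X$ with $\exists_g n \leq m$; I send it to $g^\dag : X \to Y$, and the task is to check that this is a legitimate morphism $(X,m^\perp) \to (Y,n^\perp)$ of $\ClSub(\cat{H})$.

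The only substantial point is therefore the implication that $\exists_g n \leq m$ forces $\exists_{g^\dag}(m^\perp) \leq n^\perp$. This is exactly the de Morgan duality already packaged in Theorem~\ref{thm:existsandadjoints}. Unfolding that adjunction, and using $\mathrm{Im}(hu) = \exists_h u$ together with $(-)^\perp = \ker((-)^\dag)$, gives for any $h : A \to B$ the biconditional $v \leq (\exists_h u)^\perp \Leftrightarrow u \leq (\exists_{h^\dag} v)^\perp$. Applying the order-reversing involution $(-)^\perp$ to $\exists_g n \leq m$, then this biconditional with $h = g$, $u = n$, $v = m^\perp$, and then $(-)^\perp$ once more, produces the chain $\exists_g n \leq m \Leftrightarrow m^\perp \leq (\exists_g n)^\perp \Leftrightarrow n \leq (\exists_{g^\dag}(m^\perp))^\perp \Leftrightarrow \exists_{g^\dag}(m^\perp) \leq n^\perp$, which is what is needed.

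With the action on morphisms established, functoriality is automatic: because every fibre is a poset, a morphism of either total category is determined by its underlying base morphism, so preservation of identities and of composites reduces to $\id^\dag = \id$ and $(vu)^\dag = u^\dag v^\dag$, which hold since $\dag$ is a $\dag$-functor. The square~\eqref{eq:perpfibfunctor} then commutes strictly: postcomposing $(-)^\perp$ with the projection sends $(X,m)$ to $X$ and the morphism above to $g^\dag$, which is precisely $(-)^\dag$ applied to the projection of the original morphism.

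Finally I would show that $(-)^\perp$ is not fibred, that is, that it does not preserve cartesian morphisms, equivalently that it does not commute with reindexing. The obstruction is the failure of orthogonality to respect substitution recorded earlier: for $f = \pi : \field{C}^2 \to \field{C}$ and $m = 0 \in \ClSub(\field{C})$ one has $f^{-1}(m^\perp) \neq (f^{-1}m)^\perp$, so $(-)^\perp$ does not commute with the pullback functors that implement reindexing. Concretely I would take a cartesian morphism of the fibration $\ClSub(\cat{H})\op \to \cat{H}\op$ built from this data and verify that its image is not cartesian in $\ClSub(\cat{H}) \to \cat{H}$. The main obstacle is exactly this last step, where one must keep track of the competing variances---the reindexing of the domain fibration being the direct image $\exists$ while that of the codomain is the inverse image $(-)^{-1}$---and pin the failure of cartesianness onto the displayed counterexample; everything preceding it is formal once Theorem~\ref{thm:existsandadjoints} is available.
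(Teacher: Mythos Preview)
Your construction of the functor differs from the paper's and is arguably cleaner. The paper defines $(f,g)^\perp$ by a direct element-free computation: from $fm=ng$ one gets $m^\dag f^\dag \ker(n^\dag) = g^\dag n^\dag \ker(n^\dag) = 0$, so $f^\dag n^\perp$ factors through $m^\perp = \ker(m^\dag)$, yielding $g^\perp$. You instead invoke Theorem~\ref{thm:existsandadjoints} to obtain the implication $\exists_g n \le m \Rightarrow \exists_{g^\dag}(m^\perp)\le n^\perp$; this is correct and packages the same calculation at a higher level. The commutativity of~\eqref{eq:perpfibfunctor} is then immediate in both approaches.

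The gap is in the non-fibredness argument. You correctly observe that the cartesian morphisms of $\ClSub(\cat H)^{\mathrm{op}}\to\cat H^{\mathrm{op}}$ are the \emph{opcartesian} morphisms of $\ClSub(\cat H)\to\cat H$, so that reindexing in the domain is $\exists$ rather than pullback. But then the counterexample you cite, $f^{-1}(m^\perp)\neq(f^{-1}m)^\perp$, tests commutation of $(-)^\perp$ with \emph{pullback} on both sides, which is not the relevant condition; what one would need to violate is $(\exists_f m)^\perp = (f^\dag)^{-1}(m^\perp)$. You flag this variance mismatch as ``the main obstacle'' but do not resolve it, so the argument is incomplete as stated. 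The paper proceeds differently: it writes down an explicit criterion for a morphism of the total category to be cartesian and exhibits a concrete counterexample built from a biproduct coprojection ($m=\kappa_M$, $n=\id$, $f=mm^\dag$), checking the criterion fails after applying $(-)^\perp$. If you want to salvage your route, you must either adapt your $\pi:\field C^2\to\field C$ example so that it actually produces an opcartesian lift whose image is not cartesian, or abandon it in favour of a direct computation as the paper does.
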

\begin{proof}
  We can understand $(-)^\perp$ as a functor $\ClSub(\cat{H})\op \to
  \ClSub(\cat{H})$ by extending its action on morphisms as follows. Let
  $(f,g)$ be a morphism $m \to n$, that is, let $f:X \to Y$ and $g:M
  \to N$ satisfy $fm=ng$. We are to define a morphism $(f,g)^\perp :
  n^\perp \to m^\perp$, that is, a pair $f^\perp:Y \to X$ and
  $g^\perp:N^\perp \to M^\perp$ satisfying $f^\perp \after n^\perp =
  m^\perp \after g^\perp$. Put $f^\perp = f^\dag$. Then  
  \[
      m^\dag \after f^\dag \after n^\perp
    = g^\dag \after n^\dag \after \ker(n^\dag)
    = g^\dag \after 0 = 0,
  \]
  so there is a $g^\perp$ such that $f^\dag \after n^\perp =
  \ker(m^\dag) \after g^\perp = m^\perp \after g^\perp$. It must be
  \[
      g^\perp
    = (m^\perp)^\dag \after m^\perp \after g^\perp
    = \ker(m^\dag)^\dag \after f^\dag \after n^\perp
    = \coker(m) \after f^\dag \after \ker(n^\dag).
  \]
  This explicitly defines the functor $(-)^\perp : \ClSub(\cat{H})\op \to
  \ClSub(\cat{H})$. It makes the square~\eqref{eq:perpfibfunctor}
  commute.
  Now, a morphism $(f,g):m \to n$ of $\ClSub(\cat{H})$ is Cartesian (over
  $f$) iff $f=ngm^\dag=nn^\dag f mm^\dag$. Consequently, the morphism
  $(f,g)^\perp : n^\perp \to m^\perp$ in $\ClSub(\cat{H})\op$ is Cartesian iff 
  \[
      f^\dag 
    = \ker(m^\dag) \after \ker(m^\dag)^\dag \after f^\dag \after \ker(n^\dag)
      \after \ker(n^\dag)^\dag.
  \]
  Thus, $(-)^\perp$ is a fibred functor iff $f^\dag = \ker(m^\dag)
  \ker(m^\dag)^\dag f^\dag \ker(n^\dag) \ker(n^\dag)^\dag$ whenever
  $f=nn^\dag f mm^\dag$ for any morphism $f$ and $\dag$-mono's $m$ and
  $n$.

  Finally, we come to our counterexample. Take $m=\kappa_M$ for $M \neq
  0$, $f=mm^\dag$ and $n=\id[M\oplus M]$. Then $f=mm^\dag =mm^\dag
  mm^\dag = nn^\dag f mm^\dag$. But $\ker(m^\dag) = \kappa'$ so
  $\ker(m^\dag)^\dag = \pi'$ and $\ker(n^\dag)=0$, so
  \begin{align*}
      \ker(m^\dag) \after \ker(m^\dag)^\dag \after f^\dag \after \ker(n^\dag)
      \after \ker(n^\dag)^\dag.
    = \kappa' \after \pi' \after f^\dag \after 0
    = 0
    \neq f^\dag.
  \end{align*}
  Hence $(-)^\perp$ is not a fibred functor.
  \qed
\end{proof}

\end{document}